\numberwithin{equation}{section}
\newtheorem{Theorem}{Theorem}[section]
\newtheorem{Lemma}[Theorem]{Lemma}
\newtheorem{Proposition}[Theorem]{Proposition}
 { \theoremstyle{definition}

\newtheorem{Note}[Theorem]{Note}
\newtheorem{Example}[Theorem]{Example}
\newtheorem{Remark}[Theorem]{Remark} }
\begin{document}


\newcommand{\arXivNumber}{1610.09445}

\renewcommand{\thefootnote}{}

\renewcommand{\PaperNumber}{023}

\FirstPageHeading

\ShortArticleName{On Toric Poisson Structures of Type $(1,1)$ and their Cohomology}

\ArticleName{On Toric Poisson Structures of Type $\boldsymbol{(1,1)}$\\ and their Cohomology\footnote{This paper is a~contribution to the Special Issue ``Gone Fishing''. The full collection is available at \href{http://www.emis.de/journals/SIGMA/gone-fishing2016.html}{http://www.emis.de/journals/SIGMA/gone-fishing2016.html}}}

\Author{Arlo CAINE and Berit Nilsen GIVENS}
\AuthorNameForHeading{A.~Caine and B.N.~Givens}
\Address{California State Polytechnic University Pomona,\\ 3801 W.~Temple Ave., Pomona, CA, 91768, USA}
\Email{\href{mailto:jacaine@cpp.edu}{jacaine@cpp.edu}, \href{mailto:bngivens@cpp.edu}{bngivens@cpp.edu}}

\ArticleDates{Received October 29, 2016, in f\/inal form March 28, 2017; Published online April 06, 2017}

\Abstract{We classify real Poisson structures on complex toric manifolds of type $(1,1)$ and initiate an investigation of their Poisson cohomology. For smooth toric varieties, such structures are necessarily algebraic and are homogeneous quadratic in each of the distinguished holomorphic coordinate charts determined by the open cones of the associated simplicial fan. As an approximation to the smooth cohomology problem in each ${\mathbb C}^n$ chart, we consider the Poisson dif\/ferential on the complex of polynomial multi-vector f\/ields. For the algebraic problem, we compute $H^0$ and $H^1$ under the assumption that the Poisson structure is generically non-degenerate. The paper concludes with numerical investigations of the higher degree cohomology groups of $({\mathbb C}^2,\pi_B)$ for various~$B$.}

\Keywords{toric; Poisson structures; group-valued momentum map; Poisson cohomology}

\Classification{53D17; 37J15}

\renewcommand{\thefootnote}{\arabic{footnote}}
\setcounter{footnote}{0}

\section{Introduction}

An almost complex structure $J$ on a smooth manifold $M$ determines a splitting of the complexif\/ied tangent bundle $TM\otimes_{\mathbb R} {\mathbb C}=T^{1,0}\oplus T^{0,1}$ into sub-bundles on which $J$ acts by $i$ and $-i$, respectively. This induces a decomposition
\begin{gather*}
\textstyle{\bigwedge^2} TM\otimes_{\mathbb R}{\mathbb C}=\big(T^{1,0}\wedge T^{1,0}\big)\oplus \big(T^{1,0} \wedge T^{0,1}\big)\oplus \big(T^{0,1}\wedge T^{0,1}\big)
\end{gather*}
and the smooth sections of $T^{1,0} \wedge T^{0,1}$ are termed bi-vector f\/ields on~$M$ of type~$(1,1)$. Observe that the canonical complex conjugation in the f\/ibers $TM\otimes_{\mathbb R} {\mathbb C}$ interchanges~$T^{1,0}$ and~$T^{0,1}$. Thus, $T^{1,0} \wedge T^{0,1}$ is stable under the complex conjugation on $\bigwedge^2 TM\otimes_{\mathbb R}{\mathbb C}$ determined by $X\wedge Y\mapsto \overline{X}\wedge \overline{Y}$ for each~$X$, $Y$ in a~f\/iber of $TM\otimes_{\mathbb R} {\mathbb C}$. However conjugation maps $T^{1,0}\wedge T^{1,0}$ to $T^{0,1}\wedge T^{0,1}$ and vice versa. A bi-vector f\/ield on $M$ is real if and only if it is f\/ixed by conjugation. So every real bi-vector f\/ield on $M$ is a sum of a bi-vector f\/ield of type $(1,1)$ together with a section of $T^{1,0}\wedge T^{1,0}$ plus its conjugate. In the context of this paper, we will be concerned with complex manifolds but focus on real bi-vector f\/ields of type $(1,1)$, i.e., smooth sections $\pi$ of $T^{1,0} \wedge T^{0,1}$ such that $\overline{\pi}=\pi$.

Suppose that $M$ is a complex toric manifold, i.e., a smooth complex manifold on which a~complex torus $T_{\mathbb C}$ acts holomorphically and ef\/fectively with an open dense orbit $\mathcal O$. We write~$T_{\mathbb C}$ for the torus, regarding it as the complexif\/ication of a real compact torus~$T$ in analogy with~${\mathbb T}_{\mathbb C}$ representing the non-zero complex numbers which is the complexif\/ication of the group~${\mathbb T}$ of complex numbers of modulus~$1$. By a toric Poisson structure on~$M$ we mean a Poisson structure which is invariant under the action of~$T_{\mathbb C}$. Such structures could be considered in the smooth or holomorphic category. For example, ${\mathbb C}^2$ with complex coordinates $(z,w)$ is a complex toric manifold acted on by ${\mathbb T}_{\mathbb C}^2$ via independent dilation of each complex coordinate and one can check that $\pi=-2izw\partial_z\wedge \partial_w$ is a holomorphic toric Poisson structure. Holomorphic Poisson structures are necessarily bi-vector f\/ields of type $(2,0)$. No bi-vector f\/ield of type $(1,1)$ can be holomorphic, nor can such a f\/ield occur as the real projection of holomorphic bi-vector f\/ield.

In this paper we will focus on the smooth category and consider structures of type $(1,1)$ which are generically non-degenerate. For example, ${\mathbb C}$ with coordinates $(z,\bar{z})$ is a complex toric manifold acted on by ${\mathbb T}_{\mathbb C}$ via $\zeta.(z,\bar{z})=(\zeta z,\overline{\zeta z})$ and $\pi=-2i z\bar{z}\partial_z\wedge \partial_{\bar{z}}$ is a smooth toric Poisson structure. Note that $\pi=-2i z\bar{z}\partial_z\wedge \partial_{\bar{z}}$ is real (f\/ixed by conjugation) and of type $(1,1)$. In terms of real variables, $z=x+iy$ and $\partial_z=\frac{1}{2}(\partial_x-i\partial_y)$ while $\partial_{\bar{z}}=\frac{1}{2}(\partial_x+i\partial_y)$ and one can check that
\begin{gather}\label{basic_example}
\pi=-2i z\bar{z}\partial_z\wedge \partial_{\bar{z}}=\big(x^2+y^2\big)\partial_x\wedge\partial_y,
\end{gather}
which means that $\pi$ is also a quadratic Poisson structure of elliptic type on ${\mathbb R}^2$ in the terminology of \cite{LiuXu}.

In \cite{Caine}, examples of toric Poisson structures of type $(1,1)$ on smooth toric varieties generalizing this example were constructed via a quotient construction. A smooth toric variety $M$ determines and is determined by a simplicial fan $\Sigma$ in ${\mathfrak t}$, the real Lie algebra of the torus $T$, which is simplicial with respect to the coweight lattice $\Lambda\subset{\mathfrak t}$ of $T_{\mathbb C}$. The variety can be recovered by a quotient ${\mathbb C}^d//N_{\mathbb C}\simeq M$ where the dimension $d$ and the sub-torus $N_{\mathbb C}$ of ${\mathbb T}_{\mathbb C}^d$ are determined from $\Sigma$. The double slash indicates that one does not divide ${\mathbb C}^d$ by the action of $N_{\mathbb C}$ but rather a dense open ${\mathbb T}_{\mathbb C}^d$-stable subset $\mathcal U_\Sigma$, determined from~$\Sigma$, on which $N_{\mathbb C}$ acts freely. The product Poisson structure $\pi\oplus \cdots \oplus \pi$ on ${\mathbb C}^d$, using $\pi$ from~\eqref{basic_example}, restricts to a real toric Poisson structure on~$\mathcal U_\Sigma$ of type~$(1,1)$ which is generically non-degenerate and, via the quotient map $\mathcal U_\Sigma\to {\mathbb C}^d//N_{\mathbb C}$, coinduces a real toric Poisson structure of type $(1,1)$ on $M$ which is generically non-degenerate. Each open cone in $\Sigma$ determines a distinguished holomorphic coordinate chart ${\mathbb C}^n$ on $M$ in which the action of $T_{\mathbb C}\simeq {\mathbb T}_{\mathbb C}^d/N_{\mathbb C}$ is equivalent to the action of ${\mathbb T}_{\mathbb C}^n$ on ${\mathbb C}^n$. In terms of such coordinates $(z_1,\dots,z_n)$, the Poisson structure on~$M$ has the form
\begin{gather}\label{model_for_toric_PS}
-2i\sum_{p,q=1}^n B_{pq}z_p\bar{z}_q\partial_{z_p}\wedge\partial_{\bar{z_q}}
\end{gather}
for some symmetric positive def\/inite integral matrix $[B_{pq}]$ determined by the open cone in $\Sigma$.

Let $Z_1=z_1\partial_{z_1},\dots,Z_n=z_n\partial_{z_n}$. Then we can write \eqref{model_for_toric_PS} in the form
\begin{gather*}
-2i\sum_{p,q=1}^n B_{pq} Z_p\wedge\overline{Z_q}
\end{gather*}
making it clear that it is the image of an element of ${\mathfrak t}_{\mathbb C}\wedge \overline{{\mathfrak t}_{\mathbb C}}$ under the natural map induced inf\/initesimally by the holomorphic action of $T_{\mathbb C}$ on $M$. In fact, since ${\mathfrak t}_{\mathbb C}$ is abelian and the map from ${\mathfrak t}_{\mathbb C}$ into sections of $T^{1,0}$ is a homomorphism of Lie algebras, it is clear that any element of ${\mathfrak t}_{\mathbb C}\wedge \overline{{\mathfrak t}_{\mathbb C}}$ maps to a toric Poisson structure on $M$ of type $(1,1)$. In terms of the coordinates $(z_1,\dots,z_n)$, such a structure has an expression of the form \eqref{model_for_toric_PS} for some matrix of complex numbers $[B_{pq}]$. In order for such a Poisson structure to be real, it turns out that $[B_{pq}]$ must be Hermitian and for it to be generically non-degenerate, it must be invertible. Given a formula such as \eqref{model_for_toric_PS} on ${\mathbb C}^d$, one could repeat the construction and coinduce a dif\/ferent real Poisson structure on a given toric variety $M$. From this slightly more general perspective, one can think of the construction in \cite{Caine} as simply that which begins with the toric Poisson structure on ${\mathbb C}^d$ corresponding to the identity matrix.

The purpose of this paper is to address the following natural questions regarding toric Poisson structures of type $(1,1)$.
\begin{enumerate}\itemsep=0pt
\item To what extent does this slight modif\/ication of the construction in \cite{Caine} classify all toric Poisson structures of type $(1,1)$?
\item What is the signif\/icance, if any, of the rationality of the coef\/f\/icients $[B_{pq}]$ of a toric Poisson structure as in \eqref{model_for_toric_PS}?
\item To what extent does the cohomology of a toric Poisson structure of type $(1,1)$ depend on the coef\/f\/icient matrix $[B_{pq}]$ in each chart and the topology of the toric variety $M$?
\end{enumerate}
In the f\/irst section of the paper we classify real toric Poisson structures of type $(1,1)$ establishing the following theorem.

\begin{Theorem}\label{1st_main_theorem}
Suppose that $(M,T_{\mathbb C})$ is a complex toric manifold. Then the real toric Poisson structures of type $(1,1)$ on $M$ are in one-to-one correspondence with the Hermitian forms on~${\mathfrak t}_{\mathbb C}^*$. We denote this correspondence $\pi_B\leftrightarrow B$. Furthermore, if $M$ is a smooth toric variety with corresponding simplicial fan $\Sigma$ in ${\mathfrak t}$, then in each distinguished holomorphic coordinate chart on~$M$ associated to an open cone of the fan for~$M$, $\pi_B$ has the form \eqref{model_for_toric_PS} where $[B_{pq}]$ is the matrix representing $B$ with respect to the integral basis spanning the cone.
\end{Theorem}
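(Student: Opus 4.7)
The plan is to build the correspondence $B \leftrightarrow \pi_B$ by carrying out the construction sketched in the introduction in both directions and matching the reality condition on a bi-vector with Hermitian symmetry. I would begin with basic linear algebra: a Hermitian form on $\mathfrak{t}_{\mathbb{C}}^*$ corresponds, under the double-dual identification, to an element of $\mathfrak{t}_{\mathbb{C}} \otimes \overline{\mathfrak{t}_{\mathbb{C}}}$, and this identifies with a subspace of $\mathfrak{t}_{\mathbb{C}} \wedge \overline{\mathfrak{t}_{\mathbb{C}}} \subset \Lambda^2(\mathfrak{t}_{\mathbb{C}} \oplus \overline{\mathfrak{t}_{\mathbb{C}}})$. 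A short calculation using $\overline{Z_p} \wedge Z_q = -Z_q \wedge \overline{Z_p}$ and the $-2i$ prefactor from \eqref{basic_example} shows that the element $-2i \sum B_{pq} Z_p \wedge \overline{Z_q}$ is fixed by the natural conjugation $X\wedge Y \mapsto \overline{X}\wedge \overline{Y}$ precisely when $B_{pq} = \overline{B_{qp}}$, i.e.\ when $[B_{pq}]$ is Hermitian. Given such a $B$, I define $\pi_B$ as the image of the corresponding element of $\mathfrak{t}_{\mathbb{C}}\wedge \overline{\mathfrak{t}_{\mathbb{C}}}$ under the map into $\Gamma(T^{1,0}\wedge T^{0,1})$ induced by the infinitesimal $T_{\mathbb{C}}$-action. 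Then $\pi_B$ is automatically $T_{\mathbb{C}}$-invariant and of type $(1,1)$; reality is the computation just described; and integrability $[\pi_B,\pi_B]=0$ reduces to vanishing of the Lie brackets $[Z_p,Z_q]$, $[Z_p,\overline{Z_q}]$, $[\overline{Z_p},\overline{Z_q}]$, which holds since $\mathfrak{t}_{\mathbb{C}}$ is abelian.

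For bijectivity, I would use the open dense orbit $\mathcal{O}\subset M$, on which $T_{\mathbb{C}}$ acts freely and transitively so that $\mathcal{O}$ is equivariantly diffeomorphic to $T_{\mathbb{C}}$ acting on itself by translation. Consequently, the space of $T_{\mathbb{C}}$-invariant smooth sections of $T^{1,0}\wedge T^{0,1}$ over $\mathcal{O}$ is canonically isomorphic to the fiber of that bundle at the identity, namely $\mathfrak{t}_{\mathbb{C}}\wedge \overline{\mathfrak{t}_{\mathbb{C}}}$. Given any real toric Poisson structure $\pi$ of type $(1,1)$ on $M$, the restriction $\pi|_{\mathcal{O}}$ therefore determines a unique $\hat B\in \mathfrak{t}_{\mathbb{C}}\wedge\overline{\mathfrak{t}_{\mathbb{C}}}$, which is real by reality of $\pi$ and hence Hermitian by the first paragraph. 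Since $\pi$ and $\pi_B$ are smooth on $M$ and agree on the dense open $\mathcal{O}$, they agree globally, proving $\pi=\pi_B$. Injectivity follows from the same uniqueness at the identity.

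For the coordinate formula on a smooth toric variety with fan $\Sigma$, in the distinguished chart $\mathbb{C}^n$ attached to an open cone with integral generators $v_1,\dots,v_n$, the action of $T_{\mathbb{C}}$ is conjugate to the standard action of $\mathbb{T}_{\mathbb{C}}^n$ on $\mathbb{C}^n$ with the $v_p$ corresponding to the coordinate one-parameter subgroups. Hence the infinitesimal generator attached to $v_p$ is $Z_p = z_p\partial_{z_p}$, and substituting into $-2i\sum B_{pq} Z_p\wedge \overline{Z_q}$ yields precisely \eqref{model_for_toric_PS} with $[B_{pq}]$ the matrix of $B$ relative to the basis $\{v_p\}$.

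The main obstacle I anticipate is the bookkeeping of the conjugation convention, especially tracking the $-2i$ prefactor and the sign change when swapping $\overline{Z_p}\wedge Z_q$ to $Z_q\wedge \overline{Z_p}$, so as to identify the correct reality condition with Hermiticity rather than anti-Hermiticity. Beyond that, the argument is a standard reduction to the open orbit plus a density extension, and no serious technical difficulties are expected.
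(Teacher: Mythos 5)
Your proposal is correct and follows essentially the same route as the paper: reality of $-2i\sum B_{pq}Z_p\wedge\overline{Z_q}$ is matched with Hermiticity of $[B_{pq}]$, integrability comes from the commuting invariant fields, the converse is obtained by restricting to the open orbit where the free transitive action trivializes $T^{1,0}\wedge T^{0,1}$ and then extending by density, and the chart formula comes from $X_p=z_p\partial_{z_p}$. The only cosmetic difference is that you identify invariant sections over $\mathcal O$ directly with the fiber $\mathfrak{t}_{\mathbb C}\wedge\overline{\mathfrak{t}_{\mathbb C}}$ at a point, whereas the paper writes $\pi$ in the invariant frame with function coefficients and deduces constancy from the coefficients being simultaneously holomorphic and anti-holomorphic.
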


The formulas for $\pi_B$ in each of the distinguished holomorphic coordinate charts are then given by \eqref{model_for_toric_PS} for dif\/ferent matrices $[B_{pq}]$ which are congruent by elements of $GL(n,{\mathbb Z})$. Such structures are generically non-degenerate if and only if the Hermitian form $B$ is non-degenerate. We show in the following section that those forms which are rational with respect to the weight lattice $\Lambda^*$ in ${\mathfrak t}_{\mathbb C}^*$ then have an additional symmetry.

\begin{Theorem}\label{2nd_main_theorem}
Suppose that $(M,T_{\mathbb C})$ is a complex toric manifold and that $\pi_B$ is a generically non-degenerate real toric Poisson structure of type $(1,1)$ on $M$ associated to a non-degenerate Hermitian form $B$ on ${\mathfrak t}_{\mathbb C}^*$. Then the action of $T_{\mathbb C}$ on the open orbit $\mathcal O$ is Hamiltonian with respect to $\pi_B$ if and only if $B$ is rational with respect to the weight lattice $\Lambda^*\subset{\mathfrak t}_{\mathbb C}^*$.
\end{Theorem}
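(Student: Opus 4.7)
The plan is to reduce the question to a linear-algebraic condition on the open orbit. Since $T_{\mathbb C}$ acts simply transitively on $\mathcal O$, I would choose a base point to identify $\mathcal O\cong T_{\mathbb C}$; under this identification $\pi_B$ becomes translation-invariant and, because $B$ is non-degenerate, symplectic. Working in the distinguished chart associated to an open cone, with $\{e_1,\dots,e_n\}$ the integral basis of $\Lambda$ spanning that cone, I would introduce logarithmic coordinates $z_p=e^{\xi_p+i\theta_p}$ on $\mathcal O$, where $\xi_p$ is a globally defined real function and $\theta_p$ is multi-valued modulo $2\pi$. Substituting $Z_p=\tfrac12(\partial_{\xi_p}-i\partial_{\theta_p})$ into~\eqref{model_for_toric_PS} and decomposing the Hermitian matrix as $B=S+iA$, with $S$ real symmetric and $A$ real antisymmetric, the Poisson bivector acquires the constant-coefficient form
\begin{gather*}
\{\xi_a,\xi_b\}=\{\theta_a,\theta_b\}=A_{ab},\qquad \{\xi_a,\theta_b\}=S_{ab}.
\end{gather*}

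Next I would solve the Hamiltonian equation $X^{\#}=\pi_B^{\sharp}(dh)$ for each infinitesimal generator of the $T_{\mathbb C}$-action, which in these coordinates are the constant vector fields $\partial_{\xi_c}$ and $\partial_{\theta_c}$. Non-degeneracy of $B$ is precisely invertibility of the constant Poisson matrix, and a short calculation---writing $B^{-1}=\sigma+i\alpha$ for its Hermitian decomposition---yields linear primitives
\begin{gather*}
h_{\partial_{\xi_c}}=\sum_k\bigl(\alpha_{kc}\xi_k-\sigma_{kc}\theta_k\bigr),\qquad h_{\partial_{\theta_c}}=\sum_k\bigl(\sigma_{kc}\xi_k+\alpha_{kc}\theta_k\bigr).
\end{gather*}
The $\theta$-linear terms prevent these $h$ from being single-valued functions on $\mathcal O$; but their $\theta$-periods, pulled back against the integer loops in $T\subset T_{\mathbb C}$, are exactly what is needed to produce an honest compact-torus-valued momentum map, with target possibly a finite cover of $T$, provided every coefficient of a $\theta_k$ is rational. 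Hence the $T_{\mathbb C}$-action is Hamiltonian in the group-valued sense if and only if every entry of $B^{-1}$ in the chosen integral basis lies in $\mathbb Q+i\mathbb Q$.

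To close the argument I would invoke Cramer's rule: for an invertible matrix, rationality of its entries in an integral basis is equivalent to rationality of the entries of its inverse, and this matrix-level condition is precisely rationality of the Hermitian form $B$ on the weight lattice $\Lambda^*$. The main obstacle I anticipate is pinning down the precise notion of group-valued momentum map for the non-compact complex abelian group $T_{\mathbb C}$ and verifying that it is periods over loops in the compact torus $T\subset T_{\mathbb C}$---rather than some other cycles---that must pair rationally with the weight lattice; once that dictionary is fixed, the remainder of the proof consists of the explicit inverse computation and an elementary matrix-rationality observation.
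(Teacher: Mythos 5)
Your proposal is correct and follows essentially the same route as the paper: the paper also works on $\mathcal O\cong{\mathbb T}_{\mathbb C}^n$, writes down the explicit multivalued candidate momentum map $\mu(z)=\big(z_1^{\overline{B^{11}}}\cdots z_n^{\overline{B^{n1}}},\dots,z_1^{\overline{B^{1n}}}\cdots z_n^{\overline{B^{nn}}}\big)$ built from $B^{-1}$, verifies the differential condition, and observes that $\mu$ descends to a single-valued map into a finite quotient of $T_{\mathbb C}$ exactly when $B^{-1}$ --- equivalently $B$ --- is rational. The only differences are notational (you use real logarithmic coordinates and the decomposition $B=S+iA$ where the paper uses complex powers), and the notion of momentum map you were worried about pinning down is fixed in the paper beforehand as Lu's $G^*$-valued momentum map with dual group $G^*={\mathfrak t}_{\mathbb C}^*/\Lambda^*\cong T_{\mathbb C}$.
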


In the remaining sections of the paper, we initiate an investigation of the cohomology of such structures, focusing on $M={\mathbb C}^2$ and various example forms~$B$ in an ef\/fort to begin addressing the much harder third question. These examples remove topology from consideration because~${\mathbb C}^2$ is contractible and focus only on contributions to the cohomology from the degeneracies of the Poisson tensor. Since toric Poisson structures are algebraic in the distinguished coordinate charts, we consider the cohomology problem f\/irst in the algebraic category as an approximation to the smooth problem. The formal cohomology, in this sense, of quadratic Poisson structures was computed generally in~\cite{Monnier} although it takes some work to interpret and apply the results in this setting. We describe how to use symbolic computation with Mathematica to compute the cohomology for multivector f\/ields of low monomial degree. These numerical results conf\/irm the results of~\cite{Monnier} in these cases and suggest that the cohomology of $({\mathbb C}^2,\pi_B)$ is quite small for generic $B$ and, in particular, is f\/inite dimensional. Rather surprisingly, we found an example of a $({\mathbb C}^2,\pi_B)$ where $B$ is symmetric positive def\/inite and integral, yet the cohomology is inf\/inite dimensional, having inf\/initely many generators in wedge degrees 3 and 4. This example is the local form of the toric Poisson structure constructed in \cite{Caine} on one of the Hirzebruch surfaces. In general, classes in $H^0(M,\pi)$, $H^1(M,\pi)$, and $H^2(M,\pi)$ have well-known interpretations, but the interpretation of non-trivial classes of higher wedge degree is unclear. It is our hope that our examples will be studied further so that interpretations of higher degree classes can be found.

If $\pi$ is a generically non-degenerate real toric Poisson structure of type $(1,1)$ on a complex toric manifold $X$, it is easy to see that $H^0(M,\pi)={\mathbb C}\langle 1\rangle$ if $X$ is connected. It is a more subtle fact that ${\mathfrak t}_{\mathbb C}$ includes in $H^1(M,\pi)$ as a real subspace if the fan for $M$ is closed in ${\mathfrak t}$. The main result of the later sections is that there is nothing more in $H^1({\mathbb C}^n,\pi_B)$ when $B$ is non-degenerate, at least in the algebraic category.

\begin{Theorem}\label{3rd_main_theorem}
If $[B_{pq}]$ is an invertible $n\times n$ Hermitian matrix, then for the algebraic Poisson cohomology, the inclusion of ${\mathfrak t}_{\mathbb C}$ into $H^1({\mathbb C}^n,\pi_B)$ is an isomorphism
\begin{gather*}
H^1({\mathbb C}^n,\pi_B)\simeq {\mathfrak t}_{\mathbb C}
\end{gather*}
of real vector spaces.
\end{Theorem}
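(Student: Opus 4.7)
The plan is to leverage the torus invariance of $\pi_B$ to reduce the cohomology computation to a finite linear-algebra problem on each joint weight space. Since $\pi_B$ is invariant under the holomorphic $T_{\mathbb C}$-action, each of $Z_p = z_p\partial_{z_p}$ and $\bar Z_p = \bar z_p\partial_{\bar z_p}$ is a Poisson vector field, so the commuting derivations $[Z_p,\cdot]$ and $[\bar Z_p,\cdot]$ preserve $d_{\pi_B}$. Consequently the polynomial multi-vector complex splits as a direct sum of finite-dimensional subcomplexes $C^\bullet_{(a,b)}$ indexed by joint eigenvalues $(a,b)\in{\mathbb Z}_{\geq -1}^n\times{\mathbb Z}_{\geq -1}^n$, and in each piece, after stripping off a common factor $z^a\bar z^b$, the natural bases of $C^0_{(a,b)}$, $C^1_{(a,b)}$, $C^2_{(a,b)}$ are finite lists of wedge products of the $Z_p$'s and $\bar Z_q$'s.

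A direct computation from $\{z_r,\bar z_s\} = -2iB_{rs}z_r\bar z_s$ gives, for $a,b\in{\mathbb N}^n$,
\begin{gather*}
d_{\pi_B}\bigl(z^a\bar z^b\bigr)\;=\;2i\,z^a\bar z^b\bigl((Bb)^T Z - (\overline{Ba})^T\bar Z\bigr),
\end{gather*}
and Leibniz together with the Poisson-ness of each $Z_p,\bar Z_q$ yields, for a general vector field $V = z^a\bar z^b(\alpha^T Z + \beta^T\bar Z)$ in $C^1_{(a,b)}$, that $d_{\pi_B}V$ is the wedge of $d_{\pi_B}(z^a\bar z^b)$ with $\alpha^T Z + \beta^T\bar Z$. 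Splitting this bivector into its $Z\wedge Z$, $Z\wedge\bar Z$, and $\bar Z\wedge\bar Z$ components, the cocycle condition becomes the three linear relations $(Bb)\wedge\alpha = 0$, $(Bb)\otimes\beta + \alpha\otimes(\overline{Ba}) = 0$, and $(\overline{Ba})\wedge\beta = 0$ in ${\mathbb C}^n$. Invertibility of $B$ is the key input: when $a$ and $b$ are both nonzero it forces $\alpha\in{\mathbb C}\cdot Bb$ and $\beta\in{\mathbb C}\cdot\overline{Ba}$, and then the mixed tensor relation forces the two scalars to sum to zero, realizing $V$ exactly as a scalar multiple of $d_{\pi_B}(z^a\bar z^b)$. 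The boundary cases in which only one of $a,b$ is nonzero, or in which some entry equals $-1$ so that $C^0_{(a,b)} = 0$, are handled analogously: in each, the same invertibility of $B$ either forces the cocycle to be a coboundary or rules out the existence of cocycles. Hence $H^1_{(a,b)} = 0$ for every $(a,b)\neq(0,0)$.

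At the zero weight, $d_{\pi_B}$ annihilates constants and each of $Z_1,\ldots,Z_n,\bar Z_1,\ldots,\bar Z_n$ is a non-coboundary Poisson class, so $H^1_{(0,0)} = {\mathbb C}\langle Z_p,\bar Z_q\rangle$. Passing to the real subspace fixed by complex conjugation (equivalently, working throughout with real-valued multi-vector fields) gives the real vector space of dimension $2n$ spanned by the real generators $Z_p+\bar Z_p$ and $i(Z_p-\bar Z_p)$, which is precisely the image of the infinitesimal holomorphic action ${\mathfrak t}_{\mathbb C}\to\Gamma(T^{1,0}{\mathbb C}^n)$, yielding the asserted real-linear isomorphism ${\mathfrak t}_{\mathbb C}\simeq H^1({\mathbb C}^n,\pi_B)$. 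The main obstacle is the bookkeeping needed to keep the three wedge/tensor equations straight across the cases of sign patterns of $(a,b)$; non-degeneracy of $B$ is indispensable at precisely the step that pins $(\alpha,\beta)$ to a single line in each nonzero weight, so any weakening of the hypothesis should be expected to produce additional classes associated with the degenerate directions of~$B$.
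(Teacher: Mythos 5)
Your proposal is correct and follows essentially the same route as the paper: the joint-weight decomposition you use is exactly the paper's partition of $X$ into the pieces $X_\lambda$, your formula for $d_{\pi_B}(z^a\bar z^b)$ and the identity $d_{\pi_B}V = V\wedge(\cdots)$ are the paper's equations for $\sigma(z^\alpha w^\beta)$ and $\sigma(X_\lambda)$, and your three component relations are just an explicit unpacking of the paper's conclusion that $X\wedge\sigma(z^\alpha w^\beta)=0$ forces $X=c\,\sigma(z^\alpha w^\beta)$. The boundary cases (an entry equal to $-1$, or only one of $a,b$ nonzero) are likewise dispatched in the paper by the same invertibility argument you invoke.
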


In a forthcoming paper, we will use representation theory to completely determine the Poisson cohomology of $({\mathbb C}^n,\pi_B)$, reducing the determination of generators for the cohomology to the solution of certain Diophantine inequalities depending on the matrix $B$. This work will reinterpret the computations of \cite{Monnier}.

\subsection*{Conventions}

Throughout the paper ${\mathbb T}$ denotes the Lie group of complex numbers of modulus one. The Lie algebra of ${\mathbb T}$ will be identif\/ied with $i{\mathbb R}$ via the map $i{\mathbb R}\to{\mathbb T}$ given by $s\mapsto e^s$. The kernel of this group homomorphism is the lattice $\tau i{\mathbb Z}$ in $i{\mathbb R}$ where $\tau$ denotes the circumference of the unit circle. We use $\tau$ instead of $2\pi$ to avoid a clash of notation when this number and a Poisson structure labeled $\pi$ need to appear in the same formula. By taking products, this induces an identif\/ication of the Lie algebra of ${\mathbb T}^d$ with $(i{\mathbb R})^d$. The kernel of the product map $(i{\mathbb R})^d\to {\mathbb T}^d$ is the lattice $(\tau i{\mathbb Z})^d\subset(i{\mathbb R})^d$. We will write $e_1,e_2,\dots,e_d$ for the standard basis of~${\mathbb Z}^d$ so that $\tau i e_1,\dots,\tau ie_d$ generate $(\tau i{\mathbb Z})^d$. Writing ${\mathbb T}_{\mathbb C}$ for the complex Lie group of non-zero complex numbers and similarly identifying the Lie algebra of ${\mathbb T}_{\mathbb C}$ with ${\mathbb C}$ via the map ${\mathbb C}\to {\mathbb T}_{\mathbb C}$ given by $s\mapsto e^s$, we thus extend the identif\/ication of the real Lie subalgebra of~${\mathbb T}$ with the real subspace~$i{\mathbb R}$ of~${\mathbb C}$.

Let $T$ denote a compact connected abelian Lie group of real dimension $n$, i.e., a compact torus, and let $T_{\mathbb C}$ denote its complexif\/ication. The kernel of the group homomorphism ${\mathfrak t}_{\mathbb C}\to T_{\mathbb C}$ def\/ined by $\xi\mapsto \exp(\xi)$ is the coweight lattice $\Lambda\subset {\mathfrak t}\subset {\mathfrak t}_{\mathbb C}$. If we choose a basis $\xi_1,\dots,\xi_n$ for~$\Lambda$ and def\/ine a~${\mathbb Z}$-linear map $(\tau i{\mathbb Z})^n\to \Lambda$ by $\tau ie_k\mapsto \xi_k$, then by extending to a ${\mathbb C}$-linear map ${\mathbb C}^n\to {\mathfrak t}_{\mathbb C}$ we obtain a commutative diagram
\begin{gather*}
\begin{matrix}
0 & \longrightarrow & (\tau i{\mathbb Z})^n & \longrightarrow & {\mathbb C}^n & \longrightarrow & ({\mathbb T}_{\mathbb C})^n & \longrightarrow & 1 \\
 & & \big\downarrow & & \big\downarrow & & & & \\
0 & \longrightarrow & \Lambda & \longrightarrow & {\mathfrak t}_{\mathbb C} & \longrightarrow & T_{\mathbb C} & \longrightarrow & 1
\end{matrix}
\end{gather*}
with exact rows and in which the two vertical arrows are isomorphisms. This induces an isomorphism $({\mathbb T}_{\mathbb C})^n\to T_{\mathbb C}$ making the diagram commute. If $(z_1,\dots,z_n)=(e^{s_1},\dots,e^{s_n})$ in~$({\mathbb T}_{\mathbb C})^n$ then $(z_1,\dots,z_n)$ corresponds to
\begin{gather*}
\exp\left(\frac{s_1}{\tau i}\xi_1+\dots+\frac{s_n}{\tau i}\xi_n\right)=\exp\left(\frac{s_1}{\tau i}\xi_1\right)\cdots \exp\left(\frac{s_n}{\tau i}\xi_n\right)
\end{gather*}
in $T_{\mathbb C}$.

\begin{Note}\label{isom_with_std_torus}
For $z\not=0$ in ${\mathbb C}$ and $\xi\in \Lambda$, we write $z^\xi$ for $\exp(\frac{1}{\tau i}\log(z)\xi)\in T_{\mathbb C}$. This is well def\/ined because $\exp(k\xi)=1$ for each $k\in{\mathbb Z}$. Thus, for a choice of basis $\xi_1,\dots,\xi_n$ of $\Lambda$, we obtain an isomorphism $(T_{\mathbb C})^n\to T_{\mathbb C}$ given by $(z_1,\dots,z_n)\mapsto z_1^{\xi_1}\cdots z_n^{\xi_n}$.
\end{Note}

\section[Real toric Poisson structures of type $(1,1)$]{Real toric Poisson structures of type $\boldsymbol{(1,1)}$}

Let $(M,T_{\mathbb C})$ be a complex toric manifold and choose a ${\mathbb C}$-basis $\xi_1,\dots,\xi_n$ for ${\mathfrak t}_{\mathbb C}$. Then through the holomorphic action of $T_{\mathbb C}$ we obtain corresponding holomorphic $T_{\mathbb C}$-invariant vector f\/ields $X_1,\dots,X_n$ on $M$, i.e., holomorphic sections of $T^{1,0}$, which are $T_{\mathbb C}$-invariant. Given a Hermitian matrix of complex numbers, one can use this data to construct a real toric Poisson structure on~$M$ of type~$(1,1)$.

\begin{Lemma} \label{model_for_toric_PS_lemma}
If $B=[B_{pq}]$ is a Hermitian matrix of complex numbers, then
\begin{gather}\label{toric_representation}
\pi_B=2i\sum_{p,q=1}^n B_{pq}X_p\wedge \overline{X_q}
\end{gather}
is a real toric Poisson structure on $M$ of type $(1,1)$.
\end{Lemma}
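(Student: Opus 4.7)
The plan is to verify in turn the four properties packed into the statement: that $\pi_B$ is a bivector field of type $(1,1)$, is real (conjugation-invariant), is $T_{\mathbb{C}}$-invariant, and satisfies $[\pi_B,\pi_B]=0$. Three of these are essentially book-keeping exercises, and the main obstacle is organizing the Schouten bracket computation so that the abelian and holomorphic nature of the $T_{\mathbb{C}}$-action can do all the work.

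First I would observe that by construction each $X_p$ is a section of $T^{1,0}$, hence each $\overline{X_q}$ is a section of $T^{0,1}$, so every wedge $X_p\wedge\overline{X_q}$ lies in $T^{1,0}\wedge T^{0,1}$. This immediately shows $\pi_B$ is of type $(1,1)$. For the reality, apply the complex conjugation $Y\wedge Z\mapsto \overline{Y}\wedge\overline{Z}$ term by term and use the antilinearity of conjugation together with anticommutativity of the wedge product:
\begin{gather*}
\overline{\pi_B}=\overline{2i}\sum_{p,q}\overline{B_{pq}}\,\overline{X_p}\wedge X_q=2i\sum_{p,q}\overline{B_{pq}}\,X_q\wedge\overline{X_p}.
\end{gather*}
Swapping the summation indices and invoking the Hermitian condition $\overline{B_{qp}}=B_{pq}$ recovers $\pi_B$.

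For $T_{\mathbb{C}}$-invariance, note that the vector fields $X_1,\dots,X_n$ are $T_{\mathbb{C}}$-invariant by construction; since the action is by holomorphic diffeomorphisms it commutes with complex conjugation on the complexified tangent bundle, so the $\overline{X_q}$ are $T_{\mathbb{C}}$-invariant as well. The coefficients $B_{pq}$ are constants, so the $T_{\mathbb{C}}$-invariance of $\pi_B$ follows.

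The remaining and only substantive step is the Jacobi identity $[\pi_B,\pi_B]=0$ in the Schouten bracket. The key point is that all brackets among the generators vanish: $[X_p,X_q]=0$ and $[\overline{X_p},\overline{X_q}]=0$ because $\mathfrak{t}_{\mathbb{C}}$ is abelian and the infinitesimal action is a Lie algebra homomorphism, while $[X_p,\overline{X_q}]=0$ because a holomorphic vector field and an antiholomorphic vector field always commute (in local holomorphic coordinates, $\sum f^i(z)\partial_{z_i}$ and $\sum g^j(\bar z)\partial_{\bar z_j}$ have vanishing bracket since $\partial_{z_i}g^j=0$ and $\partial_{\bar z_j}f^i=0$). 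Expanding $[\pi_B,\pi_B]$ bilinearly using the graded Leibniz rule reduces every summand to a wedge of three of the generators with a Lie bracket of two generators as a coefficient, and each such coefficient vanishes. Thus $\pi_B$ is Poisson. I expect this commutativity computation to be the main obstacle only insofar as one must be careful with the signs in the Schouten bracket formula for decomposable bivectors; once the vanishing of all generator brackets is in hand, the conclusion is immediate.
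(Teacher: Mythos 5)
Your proposal is correct and follows essentially the same route as the paper: reality via the Hermitian symmetry $\overline{B_{qp}}=B_{pq}$ after conjugating term by term, and the Jacobi identity by observing that all pairwise Schouten brackets of the generators $X_p$, $\overline{X_q}$ vanish (abelianness of $\mathfrak{t}_{\mathbb C}$, conjugation preserving brackets, and commutativity of holomorphic with antiholomorphic derivations), so $[\pi_B,\pi_B]=0$ by bilinearity and the graded Leibniz rule. The only difference is that you spell out the $T_{\mathbb C}$-invariance of the $\overline{X_q}$, which the paper absorbs into ``by construction.''
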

\begin{proof}
By construction, $\pi_B$ is a $T_{\mathbb C}$-invariant bi-vector f\/ield on $M$ on type $(1,1)$. Since
\begin{gather*}
\overline{\pi_B} =  \overline{2i\sum_{p,q=1}^n B_{pq}X_p\wedge \overline{X_q}} = -2i\sum_{p,q=1}^n \overline{B_{pq}}\,\overline{X_p}\wedge X_q \\
\hphantom{\overline{\pi_B}}{} = 2i\sum_{p,q=1}^n \overline{B_{pq}}\, X_q\wedge \overline{X_p} = 2i\sum_{p,q=1}^n \overline{B_{qp}}\,X_p\wedge \overline{X_q} \end{gather*}
we see that $\pi_B$ is real ($\overline{\pi_B}=\pi_B$) because $\overline{B_{qp}}=B_{pq}$ for each $p,q=1,\dots,n$. The holomorphic action of $T_{\mathbb C}$ on $M$ induces a homomorphism of Lie algebras ${\mathfrak t}_{\mathbb C}\to T^{1,0}$ and this carries $\xi_p\to X_p$ for each $p$ by def\/inition. Thus, $[X_p,X_{p'}]=0$ for each $p,p'=1,2,\dots,n$. Since conjugation preserves brackets, we know that $[\overline{X_q},\overline{X_{q'}}]=0$ for all $q,q'=1,2,\dots,n$. Recalling that holomorphic derivations and anti-holomorphic derivations commute, we f\/ind that $[X_p,\overline{X_q}]=0$ for all $p,q=1,2,\dots,n$. Thus $\pi_B$ is a linear combination of wedge products of commuting vector f\/ields and hence $[\pi_B,\pi_B]=0$ by linearity and the graded Leibniz rule for the Schouten bracket.
\end{proof}

The next result shows that every real toric Poisson structure of type $(1,1)$ on $M$ is of the form $\pi_B$. The representation \eqref{toric_representation} depends on the basis $\xi_1,\dots,\xi_n$ which determines both the holomorphic vector f\/ields $X_1,\dots,X_n$ and, as we will see, the matrix $[B_{pq}]$.

\begin{Theorem}\label{invariant_nondegenerate_PS_on_tori}
Suppose that $\pi$ is a real toric Poisson structure on $M$ of type $(1,1)$. For each ${\mathbb C}$-basis $\xi_1,\dots,\xi_n$ of ${\mathfrak t}_{\mathbb C}$ there exists an $n\times n$ Hermitian matrix $[B_{pq}]$ such that $\pi=\pi_B$ as in~\eqref{toric_representation}.
\end{Theorem}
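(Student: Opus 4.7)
\bigskip

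\noindent\textbf{Proof plan.} The strategy is to work first on the open dense orbit $\mathcal O$, where the vector fields $X_1,\dots,X_n$ and their conjugates form a global frame, and then extend by density.

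First, I would observe that on the open orbit $\mathcal O$ the holomorphic action of $T_{\mathbb C}$ is free and transitive (this is built into the definition of a complex toric manifold: effectiveness forces the stabilizer of a generic point to be discrete, and for a connected complex abelian group acting effectively with an open orbit of the same dimension, the action on the open orbit is free, identifying $\mathcal O$ with $T_{\mathbb C}$ up to choice of basepoint). Consequently $X_1,\dots,X_n$, which are the fundamental vector fields of $\xi_1,\dots,\xi_n$, give a holomorphic frame for $T^{1,0}|_{\mathcal O}$, and their conjugates $\overline{X_1},\dots,\overline{X_n}$ give an anti-holomorphic frame for $T^{0,1}|_{\mathcal O}$. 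Thus on $\mathcal O$ every section of $T^{1,0}\wedge T^{0,1}$ may be written uniquely as $\pi = 2i\sum_{p,q=1}^n f_{pq}\, X_p\wedge \overline{X_q}$ for smooth complex-valued functions $f_{pq}$ on $\mathcal O$ (the factor $2i$ is absorbed purely for later comparison with \eqref{toric_representation}).

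Second, I would exploit $T_{\mathbb C}$-invariance. Because $\pi$ is $T_{\mathbb C}$-invariant and each $X_p$, $\overline{X_q}$ is $T_{\mathbb C}$-invariant by construction, uniqueness of the coefficients in the frame forces each function $f_{pq}$ to be $T_{\mathbb C}$-invariant on $\mathcal O$. Since $T_{\mathbb C}$ acts transitively on $\mathcal O$, each $f_{pq}$ is a constant $B_{pq}\in{\mathbb C}$. At this point
\begin{gather*}
\pi\big|_{\mathcal O}=2i\sum_{p,q=1}^n B_{pq}\,X_p\wedge \overline{X_q}=\pi_B\big|_{\mathcal O}.
\end{gather*}
The reality condition $\overline{\pi}=\pi$, combined with the uniqueness of the coefficients in the frame, yields $\overline{B_{qp}}=B_{pq}$ exactly as in the computation in Lemma~\ref{model_for_toric_PS_lemma}, so $[B_{pq}]$ is Hermitian.

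Finally, I would promote the identity from $\mathcal O$ to all of $M$ by density: $\pi$ and $\pi_B$ are both smooth bi-vector fields on $M$ which agree on the dense open subset $\mathcal O$, so they agree on $M$. The resulting matrix $[B_{pq}]$ depends on the chosen basis $\xi_1,\dots,\xi_n$ precisely through how $X_1,\dots,X_n$ depend on that basis.

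\smallskip

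\noindent\textbf{Main obstacle.} The only subtlety I foresee is the justification that the $T_{\mathbb C}$-action on $\mathcal O$ is free (so that $\mathcal O$ is a principal homogeneous space for $T_{\mathbb C}$ and $T_{\mathbb C}$-invariant functions there are literally constants). Everything else is a routine frame-coefficient argument together with the reality computation already carried out in the preceding lemma.
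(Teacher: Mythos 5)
Your proposal is correct and follows the same skeleton as the paper's proof: expand $\pi$ over the open orbit $\mathcal O$ in the frame $\{X_p\wedge\overline{X_q}\}$ (which exists because the $T_{\mathbb C}$-action on $\mathcal O$ is free and transitive), extract a Hermitian matrix of coefficient functions from the reality condition, show the coefficients are constant, and extend to $M$ by density. The one step where you genuinely diverge is the constancy argument. You use the invariance at the group level: since $T_{\mathbb C}$ is abelian, each fundamental vector field satisfies $(g)_*X_\xi=X_{\operatorname{Ad}_g\xi}=X_\xi$, so pushing $\pi$ forward by $g$ multiplies each frame coefficient by $f_{pq}\circ g^{-1}$, and invariance plus uniqueness of frame coefficients makes each $f_{pq}$ a $T_{\mathbb C}$-invariant function, hence constant by transitivity. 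The paper instead works infinitesimally: it computes the Schouten brackets $[X_r,\pi]$ and $[\overline{X_r},\pi]$, which vanish by invariance, and deduces that each coefficient is simultaneously anti-holomorphic and holomorphic on $\mathcal O$, hence constant. Both are valid (group-level and infinitesimal invariance coincide here since $T_{\mathbb C}$ is connected); yours is slightly more elementary and makes transitivity do the work, while the paper's bracket computation is reused verbatim elsewhere (e.g., it is the same identity underlying the statement that the fields $z_p\partial_{z_p}$ represent classes in $H^1$). You also supply the justification that the action on $\mathcal O$ is free, which the paper asserts without proof, and you correctly note that the Jacobi identity for $\pi$ plays no role in this theorem, since $\pi$ is assumed Poisson and the closedness of the formula $\pi_B$ under the Schouten bracket is the content of Lemma~\ref{model_for_toric_PS_lemma}.
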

\begin{proof}
The key point is that a ${\mathbb C}$-basis $\xi_1,\dots,\xi_n$ for ${\mathfrak t}_{\mathbb C}$ determines a frame $X_1,\dots,X_n$ for $T^{1,0}$ over the open dense $T_{\mathbb C}$-orbit $\mathcal O$. This is because the action of $T_{\mathbb C}$ is free and transitive on $\mathcal O$. By conjugation, $\overline{X_1},\dots,\overline{X_n}$ is then a frame for $T^{0,1}$ over $\mathcal O$. Consequently, the bi-vector f\/ields $X_p\wedge\overline{X_q}$ for $p,q=1,2,\dots,n$ form a frame for $T^{1,0}\wedge T^{0,1}$ over $\mathcal O$. Thus, there exist smooth complex valued functions $B_{pq}$ on $\mathcal O$ such that
\begin{gather}\label{pi_rep_on_O}
\pi=2i\sum_{p,q=1}^n B_{pq}X_p\wedge \overline{X_q}
\end{gather}
on $\mathcal O$. As we saw in the proof of Lemma \ref{model_for_toric_PS_lemma}, \eqref{pi_rep_on_O} implies that
\begin{gather*}
\overline{\pi}=2i\sum_{p,q=1}^n \overline{B_{qp}}\,X_p\wedge \overline{X_q}
\end{gather*}
on $\mathcal O$. Since $\pi$ is real, we can match coef\/f\/icients to conclude that $B_{pq}=\overline{B_{qp}}$ for each $p,q=1,\dots,n$, so that $[B_{pq}]$ is a Hermitian matrix of functions. It remains to show that these functions are constant.

If $\pi$ is also $T_{\mathbb C}$-invariant, then the Schouten bracket $[X_r,\pi]=0$ for each $r=1,\dots,n$. Observe that
\begin{gather*}
[X_r,B_{pq}X_p\wedge \overline{X_q}]=X_r(B_{pq})X_p\wedge \overline{X_q}+B_{pq}[X_r,X_p]\wedge \overline{X_q} + B_{pq}X_p\wedge [X_r,\overline{X_q}] \\
\hphantom{[X_r,B_{pq}X_p\wedge \overline{X_q}]}{} =X_r(B_{pq})X_p\wedge \overline{X_q}
\end{gather*}
because the Schouten bracket is a graded derivation. Thus, $[X_r,\pi]=0$ on ${\mathbb T}_{\mathbb C}^n$ for each $r=1,\dots,n$ if and only if $X_r(B_{pq})=0$ on $\mathcal O$ for each $p,q,r=1,\dots,n$. Hence, each $B_{pq}$ is anti-holomorphic on $\mathcal O$. Likewise, the fact that $[\overline{X_r},\pi]=0$ for all $r=1,\dots,n$ can be used to argue that $B_{pq}$ is also holomorphic on $\mathcal O$. Therefore, each $B_{pq}$ is constant on $\mathcal O$. But since the vector f\/ields $X_p$ and $\overline{X_q}$ are smooth on $M$ and the functions $B_{pq}$ are constant on the open dense set~$\mathcal O$, the representation~\eqref{pi_rep_on_O} is actually valid on all of $M$. Hence $\pi=\pi_B$ on $M$ as desired.
\end{proof}

\begin{Proposition}\label{generically_nondeg_prop}
If $\pi=\pi_B$ is a real toric Poisson structure of type $(1,1)$ on $M$ which is generically non-degenerate, then $[B_{pq}]$ is invertible.
\end{Proposition}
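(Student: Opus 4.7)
The plan is to extract the matrix $[B_{pq}]$ as the matrix of $\pi_B^{\sharp}$ in a suitable frame at any point of the open orbit, and then use generic non-degeneracy at such a point to conclude invertibility.

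First I would pick a point $m$ in the open dense orbit $\mathcal{O}$ at which $\pi_B$ is non-degenerate; such a point exists by the hypothesis of generic non-degeneracy together with the density of $\mathcal{O}$. Recall that the frame $X_1,\dots,X_n$ for $T^{1,0}$ (and hence $\overline{X_1},\dots,\overline{X_n}$ for $T^{0,1}$) exists on all of $\mathcal{O}$ by the argument in the proof of Theorem~\ref{invariant_nondegenerate_PS_on_tori}. Let $\theta^1,\dots,\theta^n$ be the dual coframe to $X_1,\dots,X_n$ in $(T^{1,0})^*$, so that $\overline{\theta^1},\dots,\overline{\theta^n}$ is dual to $\overline{X_1},\dots,\overline{X_n}$ in $(T^{0,1})^*$.

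Next I would compute the complexified contraction map $\pi_B^{\sharp}\colon T_m^*M\otimes\mathbb{C}\to T_mM\otimes\mathbb{C}$ in this frame. Because $\pi_B$ has pure type $(1,1)$, the map $\pi_B^{\sharp}$ sends $(T^{1,0})^*$ into $T^{0,1}$ and $(T^{0,1})^*$ into $T^{1,0}$; there are no mixed or diagonal contributions. A direct contraction of $\pi_B=2i\sum_{p,q}B_{pq}\,X_p\wedge\overline{X_q}$ with $\theta^r$ gives
\begin{gather*}
\pi_B^{\sharp}\big(\theta^r\big)=2i\sum_{q=1}^n B_{rq}\,\overline{X_q},
\end{gather*}
so the matrix of $\pi_B^{\sharp}|_{(T^{1,0})^*}\colon (T_m^{1,0})^*\to T_m^{0,1}$ in the bases $\{\theta^r\}$ and $\{\overline{X_q}\}$ is exactly $2i[B_{rq}]$. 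The Hermitian symmetry of $B$ forces the complex-conjugate block to be $-2i[B_{pq}]^{*}$, but the single identity above already suffices.

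Finally, non-degeneracy of $\pi_B$ at $m$ means $\pi_B^{\sharp}$ is an isomorphism, and therefore its complexification is an isomorphism that must restrict to an isomorphism on each of the two type-pure summands above. In particular $2i[B_{pq}]$ is invertible, so $[B_{pq}]$ is invertible, as desired. The only subtle point I would flag is justifying that ``generically non-degenerate'' on $M$ implies non-degeneracy at some point of $\mathcal{O}$, which is immediate from the density of $\mathcal{O}$; beyond that the argument is a routine unwinding of the bundle-map definition in a frame.
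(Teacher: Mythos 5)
Your argument is correct, but it takes a different route from the paper's. The paper's proof goes through the top exterior power: since $\pi_B$ is $T_{\mathbb C}$-invariant, one computes
\begin{gather*}
\pi_B^{\,n}=(2i)^n\det([B_{pq}])\,X_1\wedge\cdots\wedge X_n\wedge\overline{X_1}\wedge\cdots\wedge\overline{X_n},
\end{gather*}
and since the top multi-vector f\/ield on the right is nowhere zero on $\mathcal O$, generic non-degeneracy (i.e., $\pi_B^{\,n}\not=0$ somewhere) forces $\det([B_{pq}])\not=0$. You instead evaluate the anchor map $\pi_B^{\sharp}$ at a single point of $\mathcal O$ in the frame $X_p$, $\overline{X_q}$, use the type-$(1,1)$ condition to see that the complexif\/ied anchor is block anti-diagonal with respect to the splitting $(T^{1,0})^*\oplus(T^{0,1})^*\to T^{1,0}\oplus T^{0,1}$, and read of\/f $2i[B_{pq}]$ as the matrix of one block; invertibility of the anchor at that point then forces invertibility of the block. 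Both arguments are sound and elementary. The paper's is more compact and packages the whole determination into the single scalar $\det(B)$; yours makes explicit exactly where the type decomposition enters and avoids the combinatorial bookkeeping hidden in the identity for $\pi_B^{\,n}$. Two small wording issues in your write-up, neither af\/fecting correctness: the complexif\/ied anchor does not ``restrict to'' the type-pure summands (it interchanges them, as you yourself note earlier), so it is better to say that a block anti-diagonal isomorphism between spaces with equidimensional summands has invertible blocks; and the sign and conjugation details of the second block are convention-dependent, but as you observe they are not needed.
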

\begin{proof}
Since $\pi_B$ is $T_{\mathbb C}$-invariant,
\begin{gather*}
\pi_B^n=(2i)^n\det([B_{pq}])X_1\wedge\dots \wedge X_n\wedge \overline{X_1}\wedge \dots \wedge \overline{X_n}
\end{gather*}
is $T_{\mathbb C}$-invariant. Since $X_1\wedge\dots \wedge X_n\wedge \overline{X_1}\wedge \dots \wedge \overline{X_n}$ is non-zero on $\mathcal O$ we conclude that $\det([B_{pq}])\not=0$ on $\mathcal O$ if $\pi$ is generically non-degenerate. Hence $[B_{pq}]$ is invertible.
\end{proof}

\begin{Remark}
By applying Theorem \ref{invariant_nondegenerate_PS_on_tori} for a dif\/ferent choice of basis $\xi_1',\dots,\xi_n'$ of ${\mathfrak t}_{\mathbb C}$, one obtains another Hermitian matrix $[B_{pq}']$ such that $\pi=\pi_{B'}$. One can check that the matri\-ces~$[B_{pq}]$ and~$[B_{pq}']$ are congruent via the change of basis matrix from $\xi_1,\dots,\xi_n$ to $\xi_1',\dots,\xi_n'$. Thus, these matrices represent the same Hermitian form on ${\mathfrak t}_{\mathbb C}^*$. The f\/irst half of Theorem~\ref{1st_main_theorem} from the introduction has now been established. The real toric Poisson structures on~$M$ of type~$(1,1)$ are in one-to-one correspondence with the Hermitian forms on~${\mathfrak t}_{\mathbb C}^*$.
\end{Remark}

If $M$ is a smooth compact toric variety with associated simplicial fan $\Sigma$ in ${\mathfrak t}$, then we obtain a~distinguished family of holomorphic coordinate charts on $M$, one for each open cone in $\Sigma$. Such a cone is spanned by a ${\mathbb Z}$-basis $\xi_1,\dots,\xi_n$ of the coweight lattice $\Lambda$ in ${\mathfrak t}$ and with this basis we obtain an isomorphism ${\mathbb T}_{\mathbb C}^n\to T_{\mathbb C}$ given by $(z_1,\dots,z_n)\mapsto z_1^{\xi_1}\dots z_n^{\xi_n}$ as in Note \ref{isom_with_std_torus}. By choosing a point $x\in\mathcal O$, we obtain a map ${\mathbb T}_{\mathbb C}^n\to T_{\mathbb C}.x=\mathcal O$ and holomorphic coordinates on $\mathcal O$. In terms of these coordinates, the holomorphic vector f\/ields $X_1,\dots,X_n$ associated to $\xi_1,\dots,\xi_n$ have the form $X_p=z_p\partial_{z_p}$. Thus
\begin{gather}\label{pi_B_in_coordinates}
\pi=i\sum_{p,q=1}^n B_{pq} z_p\bar{z}_q \partial_{z_p}\wedge\partial_{\bar{z}_q}
\end{gather}
in these coordinates.

In~\cite{Caine}, local forms such as these were obtained for Poisson structures on smooth toric varieties which were coinduced from a product Poisson structure on ${\mathbb C}^d$ via a quotient construction. There, however, the matrices $[B_{pq}]$ were also integral and thus associated to integral Hermitian forms on~${\mathfrak t}_{\mathbb C}^*$. We will next show that toric Poisson structures of type $(1,1)$ associated to non-degenerate Hermitian forms which are rational with respect to $\Lambda$ are precisely those for which the action of~$T_{\mathbb C}$ on~$\mathcal O$ is Hamiltonian.

The sense in which we will consider the action to be Hamiltonian is the following from \cite{Lu}. Regarding $T_{\mathbb C}$ as a Poisson Lie group with the trivial Poisson Lie group structure, we can think of the action $ (\mathcal O,\pi_B)\times (T_{\mathbb C},0) \to (\mathcal O,\pi_B)$ as a Poisson action and search for a momentum mapping for the action taking values in the dual Poisson Lie group. However, we must specify which model of the dual group we are considering. Since $T_{\mathbb C}$ is abelian, the simply connected model for the dual group is simply the vector space ${\mathfrak t}_{\mathbb C}^*$. But an alternative is the quotient ${\mathfrak t}_{\mathbb C}^*/\Lambda^*$ which is isomorphic to $T_{\mathbb C}$. With this model, we can regard $(T_{\mathbb C},0)$ as a self-dual Poisson Lie group. Our f\/inal goal in this section is to classify which $\pi_B$ admit such a momentum map with values in $T_{\mathbb C}$ or some f\/inite quotient of $T_{\mathbb C}$.

Suppose that $(N,\pi)\times (G,\pi_G)\to (N,\pi)$ is a Poisson action of a Poisson Lie group $(G,\pi_G)$ on a Poisson manifold $(N,\pi)$. Let $\mu\colon N\to G^*$ be a smooth map from $(N,\pi)$ to a dual group $G^*$ of $(G,\pi_G)$. Recall that $\mu$ is a $G^*$-valued \emph{momentum map}~\cite{Lu} if for each $\xi\in{\mathfrak g}$, the Lie algebra of~$G$,
\begin{gather}\label{differential condition}
X_\xi=\pi^\#(\mu^*(\theta_\xi)),
\end{gather}
where $X_\xi$ is the vector f\/ield on $N$ generated by the inf\/initesimal action of $\xi$, $\theta_\xi$ is the right-invariant 1-form on $G^*$ generated by $\xi\in {\mathfrak g}=(T_eG^*)^*$, and $\mu^*\colon T^*G^*\to T^*N$ is the cotangent lift of $\mu\colon N\to G^*$. We will say that a Poisson action of $(G,\pi_G)$ on $(N,\pi)$ is $G^*$-Hamiltonian if there exists a $G^*$-valued momentum map for the action.

\begin{Theorem}
Suppose $\pi_B$ is a real toric Poisson structure on $M$ of type $(1,1)$ associated to a~non-degenerate Hermitian form $B$ on ${\mathfrak t}_{\mathbb C}^*$. Then the action of $T_{\mathbb C}$ on $\mathcal O$ is $T_{\mathbb C}$-Hamiltonian if and only if $B$ is rational with respect to $\Lambda^*$ in ${\mathfrak t}_{\mathbb C}^*$.
\end{Theorem}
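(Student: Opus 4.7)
My plan is to restrict the problem to the open orbit $\mathcal O$, explicitly solve the momentum map equation in the coordinates associated to an integral basis, and then recognize the existence of a single-valued (or finite-cover-valued) momentum map as a period/monodromy condition that translates into rationality of $B$.

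Fix an integral basis $\xi_1,\dots,\xi_n$ of $\Lambda$, giving holomorphic coordinates $(z_1,\dots,z_n)$ that identify $\mathcal O$ with $T_{\mathbb C}$, fundamental vector fields $X_r = z_r\partial_{z_r}$, and the explicit form \eqref{pi_B_in_coordinates} for $\pi_B$. A direct computation of the sharp map yields
\begin{gather*}
\pi_B^\#(dz_a) = iz_a\sum_b B_{ab}\bar z_b\,\partial_{\bar z_b},\qquad
\pi_B^\#(d\bar z_b) = -i\bar z_b\sum_a B_{ab}z_a\,\partial_{z_a}.
\end{gather*}
Writing a hypothetical $\mu^*\theta_{\xi_r} = \sum_p f_p\,dz_p + \sum_q g_q\,d\bar z_q$ and imposing $\pi_B^\#(\mu^*\theta_{\xi_r}) = X_r$, the vanishing of the $\partial_{\bar z_b}$-components combined with invertibility of $B$ forces $f_p \equiv 0$, and matching $\partial_{z_b}$-components then determines uniquely
\begin{gather*}
\mu^*\theta_{\xi_r} = i\sum_q (B^{-1})_{qr}\,\frac{d\bar z_q}{\bar z_q}.
\end{gather*}
These 1-forms are automatically closed, and any candidate momentum map is forced to pull back each right-invariant form $\theta_{\xi_r}$ on $T_{\mathbb C}$ to precisely this expression.

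The existence question now becomes a classical period condition. Using the identification $T_{\mathbb C} \cong {\mathfrak t}_{\mathbb C}^*/\Lambda^*$, lifting $\mu$ through the quotient requires that for every generator $\gamma\in\pi_1(\mathcal O) = \Lambda$ the period of the prescribed ${\mathfrak t}_{\mathbb C}^*$-valued 1-form lie in $\Lambda^*$. Computing on the loop $\gamma_p$ winding once around $z_p$ one obtains $\int_{\gamma_p}\mu^*\theta_{\xi_r} = \tau(B^{-1})_{pr}$ (up to the standard $\tau i$ normalization), so the entire period matrix is a nonzero scalar multiple of $B^{-1}$. Requiring these periods to lie in $\Lambda^*$ is the integrality of $B^{-1}$ with respect to $\Lambda^*$, and allowing $\mu$ to take values in a finite quotient of $T_{\mathbb C}$, as the theorem permits, relaxes integrality to rationality. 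Since $B$ and $B^{-1}$ are simultaneously rational, this is exactly the condition that $B$ be rational with respect to $\Lambda^*$.

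Conversely, once $B$ is rational I would construct $\mu$ explicitly by $\mu_r(z) = \prod_q \bar z_q^{-\tau(B^{-1})_{qr}}$, interpreted on the finite cover whose degree reflects the denominators appearing in $B^{-1}$, and verify by direct substitution that the momentum map equation holds for every $\xi_r$ and hence by linearity for all $\xi\in{\mathfrak t}_{\mathbb C}$. The main technical difficulty I anticipate is tracking the $\tau i$ normalization factors hidden in the identifications of $T_{\mathbb C}$ with ${\mathfrak t}_{\mathbb C}/\Lambda$ and with ${\mathfrak t}_{\mathbb C}^*/\Lambda^*$, so as to confirm that the integrality emerging from the period computation corresponds to rationality with respect to $\Lambda^*$ precisely as stated, rather than with respect to some rescaled sublattice.
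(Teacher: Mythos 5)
Your proposal is correct and follows essentially the same route as the paper: both solve the momentum map equation in the distinguished coordinates associated to an integral basis, find that the pullback of the invariant $1$-forms is governed by $B^{-1}$ (the paper writes the solution explicitly as the multivalued map $z\mapsto\big(z_1^{\overline{B^{1k}}}\cdots z_n^{\overline{B^{nk}}}\big)_k$ and verifies \eqref{differential condition} directly), and both reduce single-valuedness into a finite quotient of $T_{\mathbb C}$ to rationality of $B^{-1}$, hence of $B$ --- your period/monodromy phrasing is just the integrated form of the paper's ``inspection'' of the exponents. The only cosmetic difference is that the paper keeps everything real by taking $\operatorname{Re}$ of the $(1,0)$-expressions throughout, whereas you isolate the $(0,1)$-components; this is a normalization choice that does not affect the argument.
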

\begin{proof}
We will compute in the global coordinates $(z_1,\dots,z_n)$ for $\mathcal O$ associated to an integral basis $\xi_1,\dots,\xi_n$ for $\Lambda$. This identif\/ies $\mathcal O$ with ${\mathbb T}_{\mathbb C}^n$ and ensures that $\pi_B$ has the form \eqref{pi_B_in_coordinates} where $[B_{pq}]$ is the matrix representing $B$ in terms of that basis. Furthermore, we can regard $T_{\mathbb C}$ as ${\mathbb T}_{\mathbb C}^n$ acting on ${\mathbb T}_{\mathbb C}^n$ by multiplication. So, to specify a $G^*=T_{\mathbb C}$-valued momentum map on $\mathcal O$ is the same as specifying a map $\mu\colon {\mathbb T}_{\mathbb C}^n\to{\mathbb T}_{\mathbb C}^n$ satisfying the dif\/ferential condition~\eqref{differential condition}. To proceed, we will classify the local solutions $\mu$ to \eqref{differential condition} and show that these local solutions can be extended globally to ${\mathbb T}_{\mathbb C}^n$ if and only if $[B_{pq}]$ is a rational matrix.

If $\xi=x_1e_1+\dots+x_ne_n$ and $\eta=y_1e_1+\dots+y_ne_n$ in ${\mathbb C}^n$, let $\langle \xi,\eta\rangle = \sum\limits_{k=1}^n \bar{x}_ky_k$ so that $\langle\cdot,\cdot\rangle$ is the standard Hermitian inner product on ${\mathbb C}^n$ which is conjugate linear in its f\/irst argument. We will identify the real dual of ${\mathbb C}^n$ with ${\mathbb C}^n$ using $\mathrm{Im}\langle\cdot,\cdot\rangle$. Then $\xi\in{\mathbb C}^n$ with $\xi=\sum\limits_{k=1}^n x_ke_k$ generates a holomorphic vector f\/ield $X_\xi$ on $\mathcal O$ and the invariant 1-form
\begin{gather*}
\theta_\xi = \operatorname{Re}\left(\sum_{k=1}^n \overline{ix}_k\frac{dw_k}{w_k}\right)
\end{gather*}
on the dual group $G^*={\mathbb T}_{\mathbb C}^n$ with coordinates $(w_1,\dots,w_n)$. To verify that this formula for $\theta_\xi$ is correct, observe that $\theta_\xi$ is invariant under multiplication by $(\zeta_1,\dots,\zeta_n)\in {\mathbb T}_{\mathbb C}^n$ and restricts to the form $\operatorname{Re}\langle i\xi,\cdot\rangle=\mathrm{Im}\langle \xi,\cdot\rangle$ at the identity.

Let $[B^{pq}]$ denote the matrix inverse to $[B_{pq}]$, so that $[\overline{B^{pq}}]$ is the matrix representing the conjugate Hermitian form dual to $B$ on ${\mathfrak t}_{\mathbb C}^*$. We set
\begin{gather*}
\mu(z_1,\dots,z_n)=\left(z_1^{\overline{B^{11}}}\cdots z_n^{\overline{B^{n1}}},\dots,z_1^{\overline{B^{1n}}}\cdots z_n^{\overline{B^{nn}}}\right)=(w_1,\dots,w_n)
\end{gather*}
and obtain a multi-valued function on $M={\mathbb T}_{\mathbb C}^n$ with values in ${\mathbb T}_{\mathbb C}^n$. We will show that $\mu$ is the unique solution to \eqref{differential condition} satisfying $\mu(1,\dots,1)=(1,\dots,1)$. By inspection, we see that $\mu$ can be interpreted as a single-valued function on $M={\mathbb T}_{\mathbb C}^n$ with values in a f\/inite quotient of $T_{\mathbb C}$ if and only if each $\overline{B^{pq}}$ is rational. But this of course requires that each $B_{pq}$ be rational as well.

Observe that
\begin{gather*}
\mu^*\left(\frac{dw_k}{w_k}\right)=\frac{d\big(z_1^{\overline{B^{1k}}}\cdots z_n^{\overline{B^{nk}}}\big)}{z_1^{\overline{B^{k1}}}\cdots z_n^{\overline{B^{kn}}}}=\sum_{j=1}^n \overline{B^{jk}}\frac{dz_j}{z_{j}}
\end{gather*}
for each $k=1,2,\dots,n$. Thus,
\begin{gather*}
\mu^*(\theta_\xi)=\operatorname{Re}\left(\sum_{k=1}^n \overline{ix_k}\left(\sum_{j=1}^n \overline{B^{jk}}\frac{dz_j}{z_{j}}\right)\right)=\operatorname{Re}\left(\sum_{j,k=1}^n \overline{ix_k}B^{kj}\frac{dz_j}{z_{j}}\right)
\end{gather*}
because $[B^{jk}]$ is Hermitian. Applying $\pi_B^\#$ from \eqref{pi_B_in_coordinates}, we obtain
\begin{gather}
\pi_B^\#(\mu^*(\theta_\xi)) =\pi_B^\#\left(\operatorname{Re}\left(\sum_{j,k=1}^n \overline{ix_k}B^{kj}\frac{dz_j}{z_{j}} \right)\right)
=\operatorname{Re}\left(\pi_B^\#\left(\sum_{j,k=1}^n \overline{ix_k}B^{kj}\frac{dz_j}{z_{j}}\right)\right) \label{step1}
\end{gather}
since $\pi_B^\#$ is real. Now,
\begin{gather}
\pi_B^\#\left(\sum_{j,k=1}^n \overline{ix_k}B^{kj}\frac{dz_j}{z_{j}}\right)=2\sum_{p,q=1}^n \sum_{j,k=1}^n B_{pq}B^{kj}\frac{\bar{x}_k z_p \bar{z}_q}{z_j} (\partial_{z_p}\wedge \partial_{\bar{z}_q})^\#(dz_j) \nonumber \\
\hphantom{\pi_B^\#\left(\sum_{j,k=1}^n \overline{ix_k}B^{kj}\frac{dz_j}{z_{j}}\right)}{}
=2\sum_{p,q=1}^n \sum_{j,k=1}^n B_{pq}B^{kj}\frac{\bar{x}_k z_p \bar{z}_q}{z_j} \delta_{pj}\partial_{\bar{z}_q} \nonumber \\
\hphantom{\pi_B^\#\left(\sum_{j,k=1}^n \overline{ix_k}B^{kj}\frac{dz_j}{z_{j}}\right)}{}
=2\sum_{p,q=1}^n \sum_{k=1}^n B_{pq}B^{k p}\bar{x}_k\bar{z}_q\partial_{\bar{z}_q} =2\sum_{k=1}^n\bar{x}_k\bar{z}_k\partial_{\bar{z}_k} \label{step2}
\end{gather}
since $[B^{kp}][B_{pq}]=[\delta_p^k]$. Combining \eqref{step1} and \eqref{step2}, we have
\begin{gather*}
\pi_B^\#(\mu^*(\theta_\xi)) =\operatorname{Re}\left(2\sum_{k=1}^n\bar{x}_k\bar{z}_k\partial_{\bar{z}_k}\right)=\sum_{k=1}^n x_k z_k\partial_{z_k} +\text{c.c.} = 2\operatorname{Re} (X_\xi)
\end{gather*}
as required.

Thus, $\mu$ is locally a solution to \eqref{differential condition}. All other solutions dif\/fer from $\mu$ by post-multiplication by an element $(\zeta_1,\dots,\zeta_n)$ of ${\mathbb T}_{\mathbb C}^n$ which ensures that $(1,\dots,1)\mapsto (\zeta_1,\dots,\zeta_n)$. This completes the proof.
\end{proof}

\begin{Example}
For $\pi_b=-2ibz\bar{z}\partial_z\wedge\partial_{\bar{z}}$ on ${\mathbb T}_{\mathbb C}$, $b\in{\mathbb R}\setminus\{0\}$, the map $\mu(z)=z^{\frac{1}{b}}$ satisf\/ies the dif\/ferential condition~\eqref{differential condition} with respect to the inf\/initesimal action of ${\mathbb T}_{\mathbb C}$ on itself. However, $\mu$~is single-valued only if $\frac{1}{b}\in{\mathbb Z}$. For general $b=\frac{m}{n}\in\mathbb Q$, this can be remedied by replacing the codomain ${\mathbb T}_{\mathbb C}$ of $\mu$ with a f\/inite quotient of~${\mathbb T}_{\mathbb C}$. Since each of those quotients are isomorphic to~${\mathbb T}_{\mathbb C}$, this does not substantially change the model for the dual group.
\end{Example}

\section{Algebraic Poisson cohomology}

In complex dimension $n=1$, there are three examples of complex toric manifolds, namely ${\mathbb T}_{\mathbb C}$, ${\mathbb C}={\mathbb T}_{\mathbb C}\cup\{0\}$, or ${\mathbb C} P^1={\mathbb C}\cup\{\infty\}$. Applying Theorem \ref{1st_main_theorem}, the real toric Poisson structures of type $(1,1)$ have the form
\begin{gather}\label{pi_B_n_is_1}
\pi_b=-2ib\,z\bar{z}\,\partial_{z}\wedge\partial_{\bar{z}}
\end{gather}
for some $b\in{\mathbb R}$ with $b\not=0$ in terms of the holomorphic coordinate on ${\mathbb T}_{\mathbb C}$. In terms of the underlying real variables $z=x+iy$, this Poisson structure has the expression
\begin{gather*}
\pi_b=b\big(x^2+y^2\big)\partial_x\wedge\partial_y.
\end{gather*}
In the example $M={\mathbb C}$, it is therefore a quadratic plane Poisson structure of elliptic type. On ${\mathbb R}^2\setminus\{(0,0)\}={\mathbb T}_{\mathbb C}$ it is non-degenerate and so its Poisson cohomology is isomorphic to the de Rham cohomology
of ${\mathbb R}^2\setminus\{(0,0)\}$. In \cite{Nakanishi}, Nakanishi performed a careful analysis of the partial dif\/ferential equations encoded in the condition $\sigma(Y)=0$, where $\sigma$ is the Poisson dif\/ferential for~$\pi_b$ and $Y$ is a smooth multivector f\/ield on ${\mathbb R}^2$, and through that analysis determined the Poisson cohomology of $\pi_b$ on ${\mathbb C}$. Since the expression~\eqref{pi_B_n_is_1} is invariant under the change of variables $z\mapsto 1/z$, a Mayer--Vietoris argument can then be applied to compute the Poisson cohomology of $\pi_b$ on ${\mathbb C} P^1$ using Nakanishi's result on~${\mathbb T}_{\mathbb C}\cup\{0\}={\mathbb C}$ and ${\mathbb T}_{\mathbb C}\cup\{\infty\}\simeq {\mathbb C}$. This was observed in~\cite{Caine}. In each case, the result is independent of $b$, but is very dif\/ferent depending on whether the domain is ${\mathbb T}_{\mathbb C}$, ${\mathbb C}$, or ${\mathbb C} P^1$.
This is because Poisson cohomology is af\/fected both by the topology of~$M$ and the degeneracy of the Poisson tensor. Rather surprisingly, in each of these cases the Poisson cohomology was found to be f\/inite dimensional and, perhaps more surprisingly, it was possible to f\/ind algebraic representatives for the Poisson cohomology classes.

As an approximation to the smooth cohomology problem, we consider the algebraic coho\-mology of $\pi_B$ on ${\mathbb C}^n$ for more general $n$. With the algebraic cohomology in hand, one might hope to compute the smooth cohomology by taking some sort of limit. Then for smooth toric varieties, which have a distinguished f\/inite atlas of af\/f\/ine charts in which any toric Poisson structure of type $(1,1)$ has the form~\eqref{pi_B_in_coordinates}, one could hope to assemble the global smooth cohomology from the local results using a Mayer--Vietoris argument.

The algebraic Poisson cohomology of $({\mathbb C}^n,\pi_B)$ is the cohomology of the dif\/ferential graded algebra $\mathcal V^{\mathbb R}={\mathbb R}[x_1,y_1,\dots,x_n,y_n]\otimes \bigwedge {\mathbb R}\langle \partial_{x_1},\partial_{y_1},\dots,\partial_{x_n},\partial_{y_n}\rangle$, the exterior algebra of multi-vector f\/ields on ${\mathbb C}^n$ (viewed as ${\mathbb R}^{2n}$) with polynomial coef\/f\/icients. The dif\/ferential $\sigma\colon \mathcal V^{\mathbb R}\to\mathcal V^{\mathbb R}$ is given by $\sigma(Y)=[Y,\pi_B]$, where $[\cdot,\cdot]$ is the Schouten bracket (\cite{Vaisman}). Since $\pi_B$ is homogeneous quadratic, $\sigma$ on the space of smooth multi-vector f\/ields leaves invariant the subspace of multi-vector f\/ields with polynomial coef\/f\/icients. It will be convenient to complexify the problem so that we can work with complex coordinates $z_k$ and $\bar{z}_k$ instead of the real coordinates $x_k$ and $y_k$. Let
\begin{gather*}
\mathcal V=\mathcal V^{\mathbb R}\otimes_{\mathbb R}{\mathbb C} \simeq {\mathbb C}[x_1,y_1,\dots,x_n,y_n]\otimes \bigwedge {\mathbb C}\langle \partial_{x_1},\partial_{y_1},\dots,\partial_{x_n},\partial_{y_n}\rangle.
\end{gather*}
Then the assignments $z_k\mapsto x_k+iy_k$, $\bar{z}_k\mapsto x_k-iy_k$, $\partial_{z_k}\mapsto \frac{1}{2}(\partial_{x_k}-i\partial_{y_k})$, and $\partial_{\bar{z}_k}\mapsto \frac{1}{2}(\partial_{x_k}+i\partial_{y_k})$ determine an isomorphism
\begin{gather*}
\mathcal V\simeq {\mathbb C}[z_1,\dots,z_n,\bar{z}_1,\dots,\bar{z}_n]\otimes \bigwedge {\mathbb C}\langle \partial_{z_1},\dots,\partial_{z_n},\partial_{\bar{z}_1},\dots,\partial_{\bar{z}_n}\rangle,
\end{gather*}
which carries the real coordinate expression for $\pi_B$ to the complex coordinate expression for~$\pi_B$. But then, at this algebraic level, computing the real algebraic cohomology of $({\mathbb C}^n,\pi_B)$ with complex coef\/f\/icients is the same computing the complex algebraic cohomology of $({\mathbb C}^{2n},\pi_B)$ where
\begin{gather}\label{general_pi_z_w_with_factor}
\pi_B=-2i\sum_{p,q=1}^n B_{pq} z_pw_q\partial_{z_p}\wedge\partial_{w_q}
\end{gather}
in terms of the complex linear coordinates $(z_1,\dots,z_n,w_1,\dots,w_n)$. Indeed, the assignments $w_k\mapsto \bar{z}_k$ determine an isomorphism of DGAs from the complex algebraic complex for $({\mathbb C}^{2n},\pi_B)$ to the real algebraic complex for $({\mathbb C}^n,\pi_B)$ but with complex coef\/f\/icients.

Since the Poisson dif\/ferential is linear in its dependence on the Poisson structure, modifying the Poisson structure by an overall non-zero scalar factor does not af\/fect the coho\-mo\-logy. So the factor of $-2i$ in \eqref{general_pi_z_w_with_factor} may be ignored. Thus, we consider the general problem of computing the cohomology of the DGA whose algebra is the exterior algebra $R\otimes\bigwedge V$ over $R={\mathbb C}[z_1,\dots,z_n,w_1,\dots,w_n]$, where $V={\mathbb C}\langle\partial_{z_1},\dots,\partial_{z_n},\partial_{w_1},\dots,\partial_{w_n}\rangle$, equipped with the dif\/ferential $\sigma$ over ${\mathbb C}$ def\/ined by $\sigma(Y)=[Y,\pi_B]$ for each $Y\in R\otimes\bigwedge V$ and
\begin{gather}\label{general_pi_z_w}
\pi_B=\sum_{p,q=1}^n B_{pq} z_pw_q\partial_{z_p}\wedge\partial_{w_q}
\end{gather}
for some Hermitian matrix $[B_{pq}]$. We will be concerned primarily with examples where $[B_{pq}]$ is invertible, so that $\pi$ is generically non-degenerate and the cohomology has a chance at being f\/inite dimensional.

\subsection[General results for $H^{0}$ and $H^{1}$]{General results for $\boldsymbol{H^{0}}$ and $\boldsymbol{H^{1}}$}

The f\/irst general result on the cohomology concerns $H^0$, which we recall is canonically isomorphic to the subspace elements of $\ker\sigma$ of wedge degree $0$.

\begin{Proposition}\label{H0_prop}
If $[B_{pq}]$ in \eqref{general_pi_z_w} is invertible, so that $\pi_B$ is generically non-degenerate, then $H^0({\mathbb C}^{2n},\pi_B)={\mathbb C}\langle 1\rangle$.
\end{Proposition}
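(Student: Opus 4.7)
The space $H^0(\mathbb{C}^{2n},\pi_B)$ coincides with the subspace of $R = \mathbb{C}[z_1,\dots,z_n,w_1,\dots,w_n]$ killed by $\sigma$, i.e., the polynomial Casimirs of $\pi_B$. Since $\sigma(f) = [f,\pi_B]$ is (up to sign) the Hamiltonian vector field $X_f = \pi_B^\#(df)$, the plan is to write out $X_f$ explicitly in the $(z_p,w_q)$ coordinates, extract the conditions imposed by $X_f = 0$, and then invert $[B_{pq}]$ to reduce everything to degree constraints on $f$.

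First I would apply $\pi_B^\#$ to $df = \sum_p (\partial_{z_p}f)\, dz_p + \sum_q (\partial_{w_q}f)\, dw_q$ using the expression $\pi_B = \sum_{p,q} B_{pq} z_p w_q\, \partial_{z_p}\wedge\partial_{w_q}$. Collecting the coefficient of each $\partial_{w_q}$ yields the condition
\begin{gather*}
w_q \sum_{p=1}^n B_{pq}\, z_p\, \partial_{z_p} f \,=\, 0 \qquad (q=1,\dots,n),
\end{gather*}
and collecting the coefficient of each $\partial_{z_p}$ yields the mirror condition
\begin{gather*}
z_p \sum_{q=1}^n B_{pq}\, w_q\, \partial_{w_q} f \,=\, 0 \qquad (p=1,\dots,n).
\end{gather*}
Because $R$ is an integral domain and $z_p, w_q$ are nonzero, the factors $w_q$ and $z_p$ can be cancelled. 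Writing $Z_p = z_p\partial_{z_p}$ and $W_q = w_q\partial_{w_q}$ for the coordinate Euler operators, the two systems become $B^{\mathsf T}\!\begin{pmatrix} Z_1 f \\ \vdots \\ Z_n f \end{pmatrix} = 0$ and $B\!\begin{pmatrix} W_1 f \\ \vdots \\ W_n f \end{pmatrix} = 0$.

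At this point invertibility of $[B_{pq}]$ does all the work: it forces $Z_p f = 0$ and $W_q f = 0$ for every $p,q$. Expanding $f = \sum_{\alpha,\beta} c_{\alpha\beta}\, z^\alpha w^\beta$ and noting that $Z_p$ and $W_q$ act diagonally on monomials with eigenvalues $\alpha_p$ and $\beta_q$, the only surviving coefficient is $c_{0,0}$, so $f$ is a constant.

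I expect no real obstacle; the argument is essentially linear algebra once the Schouten bracket is unpacked. The only point requiring care is keeping track of signs when computing $[f,\pi_B]$ and confirming that the factorizations of $z_p$ and $w_q$ may indeed be stripped inside $R$, which is immediate since $R$ is a polynomial ring.
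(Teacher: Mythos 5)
Your proof is correct, but it takes a genuinely different route from the paper's. The paper argues softly: invertibility of $[B_{pq}]$ makes $\pi_B$ non-degenerate on the dense connected open set ${\mathbb T}_{\mathbb C}^{2n}$, so any Casimir is constant there, and an algebraic function constant on a dense subset of ${\mathbb C}^{2n}$ is constant everywhere. You instead compute $\pi_B^\#(df)$ explicitly, cancel the monomial factors $z_p$ and $w_q$ in the integral domain $R$, apply invertibility of $B$ (and of $B^{\mathsf T}$) to obtain $Z_pf=0$ and $W_qf=0$ for the Euler operators, and read off constancy from their diagonal action on monomials; all of these steps check out, including the coefficient extraction and the matrix form of the two systems. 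Each approach buys something: the paper's argument transfers verbatim to the holomorphic and smooth categories and to arbitrary complex toric manifolds carrying a generically non-degenerate structure of type $(1,1)$, which the authors exploit in the paragraph immediately following their proof, whereas your computation is tied to the polynomial ring. On the other hand, your argument is entirely self-contained, pinpoints exactly where invertibility enters, and the Euler-operator (weight) decomposition you use is essentially the same device the paper later deploys, via the $X_\lambda$ decomposition, to compute $H^1$; in that sense your proof of the $H^0$ statement is the wedge-degree-zero instance of the paper's own method for Theorem~\ref{BeritsH1theorem}.
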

\begin{proof}
If $[B_{pq}]$ is invertible, then $\pi_B$ is non-degenerate on the open dense set ${\mathbb T}_{\mathbb C}^{2n}\subset{\mathbb C}^{2n}$ by a~computation similar to that in the proof of Proposition~\ref{generically_nondeg_prop}. The complex algebraic functions~$f$ on~${\mathbb C}^{2n}$ with $[f,\pi_B]=0$ are those for which~$\pi_B^\#(\partial f)=0$ on~${\mathbb C}^{2n}$, where $\partial$ is the Dolbeault operator. Such functions satisfy $\pi_B^\#(\partial f)=0$ on ${\mathbb T}_{\mathbb C}^{2n}$ in particular. Since ${\mathbb T}_{\mathbb C}^{2n}$ is connected and $\pi_B$ is non-degenerate there, we see that this requires $f$ to be constant. But an algebraic function which is constant on ${\mathbb T}_{\mathbb C}^{2n}$ is constant on~${\mathbb C}^{2n}$. Hence $H^0({\mathbb C}^{2n},\pi_B)={\mathbb C}\langle 1\rangle$ for the algebraic cohomology.
\end{proof}

This argument certainly applies to holomorphic functions, so the same result is valid in the holomorphic category. By setting $w_k=\bar{z}_k$, re-inserting the factor of $-2i$ in $\pi_B$, and considering not the Dolbeault operator $\partial$ but the full de Rham dif\/ferential $d=\partial+\bar{\partial}$, the above argument applies to the smooth real problem as well. Furthermore, it applies to any complex toric mani\-fold~$M$ equipped with a generically non-degenerate real toric Poisson structure $\pi$ of type $(1,1)$. Since $\pi$ is non-degenerate on the open dense $T_{\mathbb C}$-orbit $\mathcal O$, which is dif\/feomorphic to $T_{\mathbb C}$, only the constant functions on $M$ can be constant on the symplectic leaves of $(M,\pi)$ by continuity.

The classes in $H^1(M,\pi)$ are inf\/initesimal outer automorphisms of $(M,\pi)$, i.e., classes of vector f\/ields which preserve $\pi$ ($[X,\pi]=0$) modulo those which do so because they are Hamiltonian ($X=\pi^\#(df)$). The standard basis for ${\mathbb C}^{2n}$, thought of as the Lie algebra of ${\mathbb T}_{\mathbb C}^{2n}$, determines the holomorphic vector f\/ields $z_1\partial_{z_1},\dots,z_n\partial_{z_n},w_1\partial_{w_1},\dots,w_n\partial_{w_n}$ on ${\mathbb C}^{2n}$ through the inf\/initesimal action of $\mathbb T_{\mathbb C}^n$. Such vector f\/ields preserve $\pi_B$ in \eqref{general_pi_z_w} because $\pi_B$ is a linear combination of wedge products of these f\/ields. The f\/ields are locally, but not globally, Hamiltonian because their primitives involve $\log(z_k)$ or $\log(w_k)$ which are not even single valued on ${\mathbb T}_{\mathbb C}^{2n}$ let alone extend to $z_k=0$ or $w_k=0$. In the algebraic category, a polynomial Hamiltonian then does not exist. Rather surprisingly, every other polynomial vector f\/ield in the kernel has a polynomial Hamiltonian.

\begin{Theorem}\label{BeritsH1theorem}
If $[B_{pq}]$ in \eqref{general_pi_z_w} is symmetric and invertible, then
\begin{gather*}
H^1\big({\mathbb C}^{2n},\pi_B\big)={\mathbb C}\langle z_1\partial_{z_1},\dots,z_n\partial_{z_n},w_1\partial_{w_1},\dots,w_n\partial_{w_n}\rangle
\end{gather*}
for the algebraic cohomology.
\end{Theorem}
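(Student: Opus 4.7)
The plan is to decompose the algebraic cochain complex $R\otimes\bigwedge V$ by the $\mathbb{Z}^{2n}$-weight grading arising from the holomorphic action of $\mathbb{T}_{\mathbb{C}}^{2n}$ on $\mathbb{C}^{2n}$, where a monomial $z^aw^b\partial_{z_I}\wedge\partial_{w_J}$ has weight $(a-e_I,b-e_J)$. Because $\pi_B$ is $\mathbb{T}_{\mathbb{C}}^{2n}$-invariant and thus of weight $(0,0)$, the Poisson differential $\sigma=[\cdot,\pi_B]$ preserves every weight space, and so $H^1(\mathbb{C}^{2n},\pi_B)=\bigoplus_\mu H^1_\mu$. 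The theorem then reduces to two claims: $H^1_{(0,0)}$ is the $2n$-dimensional space listed in the statement, and $H^1_\mu=0$ for every $\mu\neq(0,0)$.

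For the first, observe that the weight-$(0,0)$ part of $\mathcal V^1$ is spanned by the $2n$ fields $z_k\partial_{z_k}$, $w_k\partial_{w_k}$, since any $z_j\partial_{z_i}$ with $i\neq j$ has non-zero weight $e_j-e_i$. Each is closed because it generates a one-parameter subgroup of the $\mathbb{T}_{\mathbb{C}}^{2n}$-action preserving $\pi_B$. The only weight-$(0,0)$ functions in $R$ are constants, whose Hamiltonians vanish, so these $2n$ fields descend to a linearly independent family in $H^1_{(0,0)}$.

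For a non-trivial weight $\mu=(\alpha,\beta)$ with $\alpha,\beta\geq 0$, I would parametrize a general element of $\mathcal V^1_\mu$ as
\begin{gather*}
Y=\sum_p c_p\,z^{\alpha+e_p}w^\beta\,\partial_{z_p}+\sum_q d_q\,z^\alpha w^{\beta+e_q}\,\partial_{w_q},
\end{gather*}
expand $\sigma(Y)=[Y,\pi_B]$ via the graded Leibniz rule for the Schouten bracket, and collect the coefficients of the three basis types of weight-$\mu$ bivectors $\partial_{z_p}\wedge\partial_{z_{p'}}$, $\partial_{z_p}\wedge\partial_{w_q}$, and $\partial_{w_q}\wedge\partial_{w_{q'}}$. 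Using $B^T=B$, the equation $\sigma(Y)=0$ is then equivalent to the linear system
\begin{gather*}
c_p(B\beta)_{p'}=c_{p'}(B\beta)_p,\qquad d_q(B\alpha)_{q'}=d_{q'}(B\alpha)_q,\qquad c_p(B\alpha)_q+d_q(B\beta)_p=0.
\end{gather*}
Invertibility of $B$ gives $B\gamma=0\Leftrightarrow\gamma=0$, so a short case-split (on whether $\alpha$ or $\beta$ vanishes) shows the solution space is exactly the one-dimensional line $(c_p,d_q)=\lambda(-(B\beta)_p,(B\alpha)_q)$, which is precisely $\pi_B^\#(d(\lambda z^\alpha w^\beta))$. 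Hence $H^1_\mu=0$.

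It remains to dispose of the edge weights in which some coordinate of $\alpha$ or $\beta$ equals $-1$; there $\mathcal V^1_\mu$ is either $0$ or spanned by a single monomial field $z^aw^b\partial_{z_{p_0}}$ (or $\partial_{w_{q_0}}$), whose image under $\sigma$ has a $\partial_{z_{p_0}}\wedge\partial_{w_q}$ coefficient proportional to $(B\alpha)_q$; since $\alpha\neq 0$ and $B$ is invertible this is non-zero for some $q$, so $\ker\sigma\cap\mathcal V^1_\mu=0$ and $H^1_\mu=0$. The main bookkeeping obstacle is the systematic Schouten-bracket expansion of $\sigma(Y)$ at a parametric weight $\mu$; once one has the three algebraic constraints above in hand, symmetry and invertibility of $B$ force the kernel to be one-dimensional in every non-trivial weight and to be exhausted by the obvious Hamiltonian of $z^\alpha w^\beta$.
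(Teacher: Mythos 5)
Your proposal is correct and follows essentially the same route as the paper: your weight decomposition is exactly the paper's splitting of a general field $X$ into the pieces $X_\lambda$, the edge weights with a $-1$ coordinate are disposed of in the same way via invertibility of $B$, and for nonnegative nonzero weights both arguments identify the kernel with the line spanned by the Hamiltonian field $\sigma(z^\alpha w^\beta)$. The only cosmetic difference is that the paper reaches that last conclusion by noting $\sigma(X)=X\wedge V$ while $\sigma(z^\alpha w^\beta)=z^\alpha w^\beta\,V$ for the same vector field $V$, so that $X\wedge\sigma(z^\alpha w^\beta)=0$ forces proportionality, whereas you solve the equivalent system of vanishing $2\times 2$ minors explicitly.
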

\begin{proof}
We start by computing the action of $\sigma$ on the natural ${\mathbb C}$-basis elements of $R\otimes \bigwedge V$, i.e., those elements of the form $z^\alpha w^\beta\partial_{z_j}$ or $z^\alpha w^\beta \partial_{w_k}$. A straightforward computation with the Schouten bracket shows that
\begin{gather}\label{sigma_on_basis_1}
\sigma\big(z^\alpha w^\beta \partial_{z_k}\big)=z^\alpha w^\beta \partial_{z_k}\wedge\left(\sum_{p=1}^n (B_p\cdot\beta) z_p\partial_{z_p}-\sum_{q=1}^n (B_q\cdot (\alpha-e_k))w_q\partial_{w_q}\right)
\end{gather}
and likewise
\begin{gather}\label{sigma_on_basis_2}
\sigma\big(z^\alpha w^\beta \partial_{w_k}\big)=z^\alpha w^\beta \partial_{w_k}\wedge\left(\sum_{p=1}^n (B_p\cdot(\beta-e_k)) z_p\partial_{z_p}-\sum_{q=1}^n (B_q\cdot \alpha) w_q\partial_{w_q}\right)
\end{gather}
for each $\alpha,\beta\in\mathbb N^n$ and $k=1,2,\dots,n$, where $B_p$ denotes the $p^{th}$ row or column of $B$.

Let us next observe that we can decompose a general linear combination of basis elements in a useful manner. To describe the decomposition, it will be convenient to brief\/ly suppress the splitting of variables into $z$ and $w$ and instead using a single variable $\zeta$ for both: we write $X=\sum c_{\mu}^k \zeta^\mu\partial_{\zeta_k}$, where $\mu=(\alpha, \beta)$, $\zeta_k=z_k$ if $k\leq n$ or $w_{k-n}$ if $k>n$, and $\zeta^\mu=z^\alpha w^\beta$. We partition the terms of $X$ by def\/ining
\begin{gather*}
X_\lambda=\sum_{\substack{\mu, k\\ \mu=\lambda+e_k}} c_{\mu}^k \zeta^\mu \partial_{\zeta_k},
\end{gather*}
as $\lambda$ ranges over integral vectors of length $2n$. Although it appears in a dif\/ferent form, this decomposition technique is used in the general work of \cite{Monnier}. Note that $X_\lambda\ne 0$ for only f\/initely many~$\lambda$. Furthermore, an inspection of~\eqref{sigma_on_basis_1} and~\eqref{sigma_on_basis_2} reveals that if $\lambda$ and $\lambda'$ are distinct, then~$\sigma(X_{\lambda})$ and~$\sigma(X_{\lambda'})$ have no like terms, that is, no terms with the same wedge and homogeneous degrees. This means that if $\sigma(X)=0$, then $\sigma(X_\lambda)=0$ for each~$\lambda$.

By considering one such $X_\lambda$ at a time, we may assume without loss of generality that $X=X_\lambda$. Since for each $\partial_{\zeta_k}$ and f\/ixed $\lambda$, the exponent vector~$\mu$ is uniquely determined, we can rewrite~$X$ as
\begin{gather*}
X=\sum_{k=1}^{2n} c^k_{\lambda+e_k} \zeta^{\lambda+e_k}\partial_{\zeta^k}.
\end{gather*}
In general, the coordinates of $\lambda+e_k$ must be nonnegative, so the coordinates of $\lambda$ must be greater than or equal to $-1$ with at most one coordinate equal to $-1$. When $\lambda=0$ in ${\mathbb Z}^{2n}$, then $X_\lambda$ is a linear combination of $\zeta_1\partial_{\zeta_1},\dots,\zeta_{2n} \partial_{\zeta_{2n}}$. If $\lambda_\ell=-1$, then $X$ is the single term $X=c\zeta^{\lambda+ e_\ell}\partial_{\zeta_\ell}$ for some $c\in{\mathbb C}$.

Suppose $\lambda_\ell=-1$ with $\ell\le n$ and translate back in terms of $z$ and $w$, so that $X=cz^\alpha w^\beta \partial_{z_\ell}$ and $\alpha_\ell=0$.
Applying~\eqref{sigma_on_basis_1}, we can compute~$\sigma(X)$ and see that the coef\/f\/icient of~$\partial_{z_\ell}\wedge \partial_{w_q}$ in~$\sigma(X)$ is $(B_q\cdot (\alpha-e_\ell))w_qz^{\alpha}w^\beta$ for each $q$. Since $B$ is invertible and $\alpha-e_\ell\ne 0$, then $\sigma(X)\ne 0$. Similarly, by applying \eqref{sigma_on_basis_2} we can deduce the same conclusion if $\ell>n$. Thus if $X=X_\lambda$ is in the kernel of $\sigma$, we may assume that all coordinates of $\lambda$ are nonnegative and so
\begin{gather}\label{good_X_lambda}
X=X_\lambda=\sum_{j=1}^n a^j z^{\alpha+e_j}w^\beta \partial_{z_j}+\sum_{j=1}^n b^j_{\alpha,\beta} z^\alpha w^{\beta+e_j}\partial_{w_j}
\end{gather}
for $\lambda=(\alpha,\beta)$ and some coef\/f\/icients $a^j_{\alpha,\beta}$ and $b^j_{\alpha,\beta}$.
Applying \eqref{sigma_on_basis_1} and \eqref{sigma_on_basis_2} to \eqref{good_X_lambda}, we f\/ind that
\begin{gather}\label{sigma_on_X_lambda}
\sigma(X)=X\wedge \left(\sum_{p=1}^n (B_p\cdot \beta)z_p\partial_{z_p}-\sum_{q=1}^n (B_q\cdot \alpha) w_q\partial_{w_q}\right).
\end{gather}
Another straightforward computation with the Schouten bracket shows that
\begin{gather}\label{sigma_of_monomial}
\sigma\big(z^\alpha w^\beta\big)=z^\alpha w^\beta \left(\sum_{p=1}^n (B_p\cdot \beta)z_p\partial_{z_p}-\sum_{q=1}^n (B_q\cdot \alpha) w_q\partial_{w_q}\right).
\end{gather}
Thus, if $X=X_\lambda\not=0$ satisf\/ies $\sigma(X)=0$, then $X\wedge \sigma(z^\alpha w^\beta)=0$ by~\eqref{sigma_on_X_lambda} and \eqref{sigma_of_monomial}. Hence, if $\sigma(z^\alpha w^\beta)\not=0$ then $X=c\sigma(z^\alpha w^\beta)$ for some $c\not=0$. If $(\alpha,\beta)=(0,0)$, then $\sigma(z^\alpha w^\beta)=0$ by inspection of~\eqref{sigma_of_monomial}. On the other hand, if $(\alpha,\beta)\not=(0,0)$, then $\sigma(z^\alpha w^\beta)\not=0$ because $[B_{pq}]$ is invertible. Thus, $X=c\sigma(z^\alpha w^\beta)=\sigma(cz^\alpha w^\beta)$ when $(\alpha,\beta)\not=(0,0)$. This completes the proof.
\end{proof}

\subsection[Algebraic cohomology for $n=1$]{Algebraic cohomology for $\boldsymbol{n=1}$} When $n=1$, the matrix $[B_{pq}]$ is $1\times 1$ consisting of a single non-zero complex number. Hence $\pi_B=bzw\partial_z\wedge\partial_w$. The cohomology of $({\mathbb C}^2,bzw\partial_z\wedge\partial_w)$ has been computed by a number of methods (cf.~\cite{Goto,Monnier2}). We supply another algebraic method. As mentioned before, an overall non-zero scalar multiple of a Poisson tensor does not af\/fect its cohomology, so we can more simply consider $\pi=zw\partial_z\wedge\partial_w$. By Proposition~\ref{H0_prop} and Theorem~\ref{BeritsH1theorem}, we know that
\begin{gather*}
H^0\big({\mathbb C}^2,bzw\partial_z\wedge\partial_w\big)={\mathbb C}\langle 1\rangle \qquad \text{and} \qquad H^1\big({\mathbb C}^2,bzw\partial_z\wedge\partial_w\big)={\mathbb C}\langle z\partial_z,w\partial_w\rangle
\end{gather*}
for each $b\not=0$. It remains then to determine $H^2({\mathbb C}^2,bzw\partial_z\wedge\partial_w)$ which is the space of all bi-vector f\/ields on ${\mathbb C}^2$ modulo the subspace of the image of $\sigma$ in wedge degree $2$. At this point, let us observe that the cochain complex $R\otimes \bigwedge V$ can be graded not only by the wedge degree from the second factor $\bigwedge V$ but also using the total homogeneous degree on the polynomial ring $R$. The Poisson dif\/ferential $\sigma$, which raises the wedge degree by $1$ also raises the homogeneous degree by $1$ because its action on a multi-vector f\/ield involves 1st order dif\/ferentiation of coef\/f\/icients post-multiplied by a homogeneous quadratic expression. The cohomology $H({\mathbb C}^2,\pi)$ then has a~direct sum decomposition
\begin{gather*}
H\big({\mathbb C}^2,\pi\big)=\bigoplus_{d\in\mathbb N} \bigoplus_{p=0}^2 H^p_{[d]}\big({\mathbb C}^2,\pi\big),
\end{gather*}
where
\begin{gather*}
H^p_{[d]}\big({\mathbb C}^2,\pi\big)=\frac{\ker \sigma\colon R_{[d]}\otimes \bigwedge^p V\to R_{[d+1]}\otimes \bigwedge^{p+1} V}{\mathrm{im}\,\sigma\colon R_{[d-1]}\otimes \bigwedge^{p-1} V\to R_{[d]}\otimes \bigwedge^p V}.
\end{gather*}
In particular, the bi-vector f\/ield $\partial_z\wedge\partial_w$ which generates $R_{[0]}\otimes \bigwedge^2 V$ is certainly not in the image of $\sigma$ and so $H^2_{[0]}({\mathbb C}^2,\pi)={\mathbb C}\langle \partial_z\wedge\partial_w\rangle$. The next result completes the computation of the complex algebraic cohomology. See Fig.~\ref{Nakanishi_vanishing_table}. Intriguingly, this recovers the same result found by Nakanishi in~\cite{Nakanishi},
\begin{gather*}
H\big({\mathbb C}^2,zw\partial_z\wedge\partial_w\big)={\mathbb C}\langle 1,z\partial_z,w\partial_w,\partial_z\wedge\partial_w,zw\partial_z\wedge\partial_w\rangle
\end{gather*}
except that he considered real coef\/f\/icients, with $z$ and $w$ as real variables, and used analytic techniques to classify the solutions to the partial dif\/ferential equations encoded in $\sigma(Y)=0$.

\begin{figure}[t]\centering
$
\begin{array}{c|ccc}
\dim H_{[d]}^p & 0 & 1 & 2 \\ \hline
0 & 1 & 0 & 1 \\
1 & 0 & 2 & 0 \\
2 & 0 & 0 & 1 \\
3 & 0 & 0 & 0 \\
4 & 0 & 0 & 0 \\
5 & 0 & 0 & 0 \\
6 & 0 & 0 & 0 \\
\end{array}
$
\caption{\label{Nakanishi_vanishing_table}Dimensions of $H^p_{[d]}({\mathbb C}^2,zw\partial_z\wedge\partial_w)$ up to $d=6$.}
\end{figure}

\begin{Proposition}
For $b\not=0$,
\begin{gather*}
H^2\big({\mathbb C}^2,bzw\partial_z\wedge\partial_w\big)\simeq {\mathbb C}\langle \partial_z\wedge\partial_w,zw\partial_z\wedge\partial_w\rangle
\end{gather*}
for the complex algebraic cohomology.
\end{Proposition}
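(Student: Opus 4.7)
The plan is to reduce the computation to a simple monomial bookkeeping. First, since the Poisson differential $\sigma$ depends linearly on the Poisson tensor, rescaling by the nonzero scalar $b$ does not affect $H^\bullet$, so I may assume $\pi = zw\,\partial_z\wedge\partial_w$. Because $\mathbb{C}^2$ is two-dimensional, $\bigwedge^3 V = 0$, so $\sigma$ automatically vanishes on $R\otimes\bigwedge^2 V$, and
\begin{gather*}
H^2\big({\mathbb C}^2,\pi\big)\;=\;\mathrm{coker}\bigl(\sigma\colon R\otimes V \to R\otimes \textstyle{\bigwedge^2} V\bigr).
\end{gather*}
Every bi-vector field has the form $f(z,w)\,\partial_z\wedge\partial_w$, so the task reduces to identifying which polynomials $f$ lie in the span of $\sigma$-images of polynomial vector fields.

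I would then compute $\sigma$ on the monomial basis of $R\otimes V$. Specializing \eqref{sigma_on_basis_1}--\eqref{sigma_on_basis_2} to $n=1$ and $[B_{pq}]=[1]$ (or computing directly via $[X,\pi]=L_X\pi$ for a vector field $X$), I obtain
\begin{gather*}
\sigma\bigl(z^a w^c \partial_z\bigr) = (1-a)\,z^a w^{c+1}\,\partial_z\wedge\partial_w,\qquad \sigma\bigl(z^a w^c \partial_w\bigr) = (1-c)\,z^{a+1}w^c\,\partial_z\wedge\partial_w.
\end{gather*}
The crucial observation is that $\sigma$ is homogeneous with respect to the bi-grading by degrees in $z$ and in $w$, and sends each monomial vector field to a scalar multiple of a single monomial bi-vector field. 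Therefore $\mathrm{im}\,\sigma$ is itself spanned by monomial bi-vector fields, and the cokernel splits bi-degree by bi-degree.

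Finally, I would read off which monomials $z^\alpha w^\beta\,\partial_z\wedge\partial_w$ actually occur in the image. The first formula hits every $(\alpha,\beta)\in\mathbb{N}^2$ with $\beta\geq 1$ and $\alpha\neq 1$, while the second hits every $(\alpha,\beta)$ with $\alpha\geq 1$ and $\beta\neq 1$. A small case check shows that the only pairs $(\alpha,\beta)\in\mathbb{N}^2$ missed by both conditions are $(0,0)$ and $(1,1)$, corresponding to $\partial_z\wedge\partial_w$ in bi-degree $(0,0)$ and $zw\,\partial_z\wedge\partial_w$ in bi-degree $(1,1)$. Their cokernel classes thus give a basis for $H^2$, confirming the claim (and consistent with the $H^2_{[0]}$ and $H^2_{[2]}$ entries in Fig.~\ref{Nakanishi_vanishing_table}). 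There is no real obstacle beyond this exponent case analysis; the clean bi-grading behavior of $\sigma$ is what makes the image computation decouple into a lattice-point enumeration.
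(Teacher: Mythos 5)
Your proof is correct and follows essentially the same route as the paper: both reduce to computing $\sigma$ on monomial vector fields, observe that every bi-vector field is automatically closed, and read off which monomials $z^\alpha w^\beta\,\partial_z\wedge\partial_w$ fail to be hit, isolating exactly $\partial_z\wedge\partial_w$ and $zw\,\partial_z\wedge\partial_w$. Your monomial-by-monomial bookkeeping via the $(z,w)$-bi-grading is just a slightly finer organization of the paper's computation of $\sigma(X)$ for a general homogeneous $X$ of fixed total degree.
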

\begin{proof} As we observed before, $H({\mathbb C}^2,bzw\partial_z\wedge\partial_w)=H({\mathbb C}^2,zw\partial_z\wedge\partial_w)$. We will compute $H^2_{[d]}({\mathbb C}^2,bzw\partial_z\wedge\partial_w)$ for each $d\ge 1$. Let $d\ge 1$ be given. Since $\ker\sigma\cap (R_{[d]}\otimes \bigwedge^2 V)=R_{[d]}\otimes \bigwedge^2 V$, it suf\/f\/ices to show that every basis element of the form $z^pw^q\,\partial_z\wedge\partial_w$, where $p+q=d$ with $d\not=2$, has a primitive with respect to $\sigma$. Consider a general element $X\in R_{[d-1]}\otimes \bigwedge^1 V$ of the form
\begin{gather*}
X=\sum_{\alpha,\beta}a_{\alpha\beta}z^\alpha w^\beta\partial_z+\sum_{\gamma,\delta} b_{\gamma,\delta}z^\gamma w^\delta \partial_w,
\end{gather*}
where the sums are over pairs of non-negative integers which sum to $d-1$. A straightforward calculation with the Schouten bracket shows that
\begin{gather*}
\sigma(X) =  -\Big({}-a_{0,d-1}w^d+b_{0,d-1}(d-2)zw^{d-1}    +\sum (a_{p,q-1}(p-1)+b_{p-1,q}(q-1))z^pw^q   \\
 \hphantom{\sigma(X) =}{} +  a_{d-1,0}(d-2)z^{d-1}w-b_{d-1,0}z^d\Big)\partial_z\wedge\partial_w, 
\end{gather*}
where the sum is over $p,q\ge 2$ with $p+q=d$. From this formula, it follows that the basis elements
\begin{gather*}
w^d\partial_z\wedge\partial_w,\qquad z^d\partial_z\wedge\partial_w
\end{gather*}
have unique primitives with respect to $\sigma$ and the basis elements
\begin{gather*}
zw^{d-1}\partial_z\wedge\partial_w,\qquad z^{d-1}w\partial_z\wedge\partial_w
\end{gather*}
have unique primitives with respect to $\sigma$ provided $d\not= 2$. The basis elements $z^pw^q\partial_z\wedge\partial_w$ with $p,q\ge 2$ have one parameter families of primitives. The only basis vectors of $R\otimes \bigwedge^2 V$ which are not accounted for are $\partial_z\wedge\partial_w$ and $zw\partial_z\wedge\partial_w$. As we already observed, the f\/irst cannot be in the image of $\sigma$. Examining the formula above for $\sigma(X)$ when $d=2$, we see that $zw\partial_z\wedge\partial_w$ does not appear either. Therefore, $H_{[d]}^2=\mathbf 0$ when $d\ge 1$, unless $d=2$ in which case $H_{[2]}^2={\mathbb C}\langle zw\partial_z\wedge\partial_w\rangle$.
\end{proof}

\subsection[Numerical results for $H^{*}$ when $n=2$]{Numerical results for $\boldsymbol{H^{*}}$ when $\boldsymbol{n=2}$}

When $n=2$, we are considering the Poisson cohomology $H({\mathbb C}^4,\pi_B)$ in the complex algebraic category where
\begin{gather*}
\pi_B=B_{11}z_1w_1\partial_{z_1}\wedge\partial_{w_1}+B_{12}z_1w_2\partial_{z_1}\wedge\partial_{w_2} +B_{21}z_2w_1\partial_{z_2}\wedge\partial_{w_1}+B_{22}z_2w_2\partial_{z_2}\wedge\partial_{w_2}
\end{gather*}
for some matrix
\begin{gather*}
B=\begin{pmatrix}B_{11} & B_{12} \\ B_{21} & B_{22}\end{pmatrix}.
\end{gather*}
With the increase in the number of variables and the number of terms in $\pi_B$, comes an increase in complexity of the cohomological computations, even for the algebraic problem. Still, $\sigma$ in this context raises wedge degree by $1$ and total homogeneous degree by $1$, so the cochain complex $R\otimes \bigwedge V$ decomposes as a~direct sum of cochain complexes $(\bigoplus_{p=0}^{4}R_{[d+p]}\otimes \bigwedge^p V,\sigma)$ over $d$ from~$-2n$ to~$\infty$, where we set $R_{[\ell]}\otimes \bigwedge^p V=\mathbf 0$ if $\ell <0$. While $R\otimes \bigwedge^p V$ is inf\/inite dimensional, each $R_{[d]}\otimes \bigwedge^p V$ is f\/inite dimensional with
\begin{gather*}
\dim_{\mathbb C} R_{[d]}\otimes \textstyle{\bigwedge^p V}= {d+2n-1 \choose 2n-1} {2n\choose p}, \qquad \text{where} \quad n=2.
\end{gather*}
Thus, $\sigma\colon R_{[d]}\otimes \bigwedge^p V\to R_{[d+1]}\otimes \bigwedge^{p+1} V$ has a matrix representation, and linear algebra can be used to determine each $H^p_{[d]}({\mathbb C}^4,\pi_B)$. For brevity, let's write $\sigma^p_{[d]}$ for $\sigma\colon R_{[d]}\otimes \bigwedge^p V\to R_{[d+1]}\otimes \bigwedge^{p+1} V$.

\begin{figure}[t]\centering
$B=\begin{pmatrix}1 & 0 \\ 0 & 1\end{pmatrix}\hspace{30mm} B=\begin{pmatrix}2 & 1 \\ 1 & 2\end{pmatrix}$\vspace{1mm}\\
$\begin{array}{c|ccccc}
\dim H_{[d]}^p & 0 & 1 & 2 & 3 & 4 \\ \hline
0 & 1 & 0 & 2 & 0 & 1 \\
1 & 0 & 4 & 0 & 4 & 0 \\
2 & 0 & 0 & 6 & 0 & 2 \\
3 & 0 & 0 & 0 & 4 & 0 \\
4 & 0 & 0 & 0 & 0 & 1 \\
5 & 0 & 0 & 0 & 0 & 0 \\
6 & 0 & 0 & 0 & 0 & 0 \\
7 & 0 & 0 & 0 & 0 & 0 \\
8 & 0 & 0 & 0 & 0 & 0
\end{array}\ \ \ \
\begin{array}{c|ccccc}
\dim H_{[d]}^p & 0 & 1 & 2 & 3 & 4 \\ \hline
0 & 1 & 0 & 0 & 0 & 1 \\
1 & 0 & 4 & 0 & 0 & 0 \\
2 & 0 & 0 & 6 & 0 & 0 \\
3 & 0 & 0 & 0 & 4 & 0 \\
4 & 0 & 0 & 2 & 0 & 1 \\
5 & 0 & 0 & 0 & 4 & 0 \\
6 & 0 & 0 & 0 & 0 & 2 \\
7 & 0 & 0 & 0 & 0 & 0 \\
8 & 0 & 0 & 0 & 0 & 0
\end{array}$
\caption{\label{table1_results}Cohomological dimensions up to $d=8$ for the two cases shown.}
\end{figure}

We were able to use Mathematica~10 to perform these computations for small $d$ up to $d=8$ with fairly fast computing times on a 2.7~GHz processor with 8~GB of RAM. The dimension of~$R_{[d]}\otimes \bigwedge^p V$ grows fairly quickly with $d$ and consequently the matrices representing each $\sigma^p_{[d]}$ get large fairly quickly. For example, $\sigma^2_{[8]}$ is represented by an $880\times 990$ matrix.

The computations involved a mix of symbolic computation and linear algebra, but the methods are standard. If we write $z^pw^q$ for the monomial $z_1^{p_1}z_2^{p_2}w_1^{q_1}w_2^{q_2}$ determined by $(p,q)\in \mathbb N^2\times \mathbb N^2$ and likewise write $\partial_{z}^\gamma\partial_{w}^\delta$ for $\partial_{z_1}^{\gamma_1}\wedge\partial_{z_2}^{\gamma_2}\wedge\partial_{w_1}^{\delta_1}\wedge\partial_{w_2}^{\delta_2}$ in $\bigwedge V$ determined by $(\gamma,\delta)\in \{0,1\}^2\times\{0,1\}^2$, then we obtain a ${\mathbb C}$-basis $\{z^p w^q \partial_{z}^\gamma \partial_{w}^\delta\}$ for $R_{[d]}\otimes \bigwedge^p$ indexed by all $(p,q)$ with $p_1+p_2+q_1+q_2=d$ and $(\gamma,\delta)$ with $\gamma_1+\gamma_2+\delta_1+\delta_2=p$. Determining the matrix representing $\sigma^p_{[d]}$ in terms of these bases involved symbolic computation, implementing calculations with the Schouten bracket. Once the matrix representations were found, computing $H^p_{[d]}$ reduced to the following linear algebra.
\begin{enumerate}\itemsep=0pt
\item Using Gauss--Jordan elimination, we can determine a basis for the column space of the matrix representing $\sigma^{p-1}_{[d-1]}$ and determine the corresponding symbolic basis for $\mathrm{im}\,\sigma^{p-1}_{[d-1]}$.
\item Next we f\/ind a basis for the null space of the matrix representing $\sigma^p_{[d]}$ and determine the corresponding symbolic basis for $\ker\sigma_{[d]}^p$.
\item If we form a matrix whose rows are the basis vectors of the column space for $\sigma_{[d-1]}^{p-1}$ (written as rows) followed by the basis vectors for the null space of the matrix for $\sigma^p_{[d]}$ (written as rows) and then perform Gauss--Jordan elimination on this matrix we obtain a matrix in reduced row echelon form.
\item If the basis for the column space of $\sigma_{[d-1]}^{p-1}$ had $k$ elements, then the upper left $k\times k$ block of this reduced matrix will be the identity. The remaining non-zero rows, rewritten as columns, form a basis for a subspace of the null space of the matrix for $\sigma^p_{[d]}$ which is complementary to column space of the matrix for $\sigma^{p-1}_{[d-1]}$. The corresponding symbolic vectors are representatives for a basis of $H^p_{[d]}$.
\end{enumerate}

\begin{figure}[t]\centering
$B=\begin{pmatrix}3 & -1 \\ -1 & 2\end{pmatrix}\hspace{20mm} B=\begin{pmatrix} 6 & -2 \\ -2 & 2 \end{pmatrix} \hspace{20mm} B=\begin{pmatrix}11 & -3 \\ -3 & 2 \end{pmatrix}$
\vspace{1mm}\\
$\begin{array}{c|ccccc}
\dim H_{[d]}^p & 0 & 1 & 2 & 3 & 4 \\ \hline
0 & 1 & 0 & 0 & 0 & 1 \\
1 & 0 & 4 & 0 & 0 & 0 \\
2 & 0 & 0 & 6 & 0 & 0 \\
3 & 0 & 0 & 0 & 4 & 0 \\
4 & 0 & 0 & 0 & 0 & 1 \\
5 & 0 & 0 & 0 & 0 & 0 \\
6 & 0 & 0 & 0 & 0 & 0 \\
7 & 0 & 0 & 0 & 0 & 0 \\
8 & 0 & 0 & 0 & 0 & 0
\end{array}\ \ \ \
\begin{array}{c|ccccc}
\dim H_{[d]}^p & 0 & 1 & 2 & 3 & 4 \\ \hline
0 & 1 & 0 & 0 & 2 & 1 \\
1 & 0 & 4 & 0 & 2 & 2 \\
2 & 0 & 0 & 6 & 2 & 2 \\
3 & 0 & 0 & 0 & 6 & 2 \\
4 & 0 & 0 & 0 & 2 & 3 \\
5 & 0 & 0 & 0 & 2 & 2 \\
6 & 0 & 0 & 0 & 2 & 2 \\
7 & 0 & 0 & 0 & 2 & 2 \\
8 & 0 & 0 & 0 & 2 & 2
\end{array}\ \ \ \
\begin{array}{c|ccccc}
\dim H_{[d]}^p & 0 & 1 & 2 & 3 & 4 \\ \hline
0 & 1 & 0 & 0 & 0 & 1 \\
1 & 0 & 4 & 0 & 0 & 0 \\
2 & 0 & 0 & 6 & 0 & 0 \\
3 & 0 & 0 & 0 & 4 & 0 \\
4 & 0 & 0 & 0 & 0 & 1 \\
5 & 0 & 0 & 0 & 0 & 0 \\
6 & 0 & 0 & 0 & 0 & 0 \\
7 & 0 & 0 & 0 & 0 & 0 \\
8 & 0 & 0 & 0 & 0 & 0
\end{array}$
\caption{Cohomological dimensions up to $d=8$ for the three cases shown.}\label{table2_results}
\end{figure}

We considered quadratic forms $B$ whose matrices $[B_{ij}]$ had one of f\/ive forms
\begin{gather*}
\begin{pmatrix}1 & 0 \\ 0 & 1\end{pmatrix},\qquad \begin{pmatrix}2 & 1 \\ 1 & 2\end{pmatrix},\qquad \begin{pmatrix}2+m^2 & -m \\ -m & 2\end{pmatrix}\qquad \text{for} \quad m=1,2,3.
\end{gather*}
These give local forms for the toric Poisson structures constructed in~\cite{Caine} on the smooth toric varieties ${\mathbb C} P^1\times {\mathbb C} P^1$, ${\mathbb C} P^2$, and the Hirzebruch surfaces~$X_m$ for $m=1,2,3$, respectively. Figs.~\ref{table1_results} and~\ref{table2_results} show the cohomological dimensions computed for each of these cases for $0\le p\le 4$ and $0\le d\le 8$. In each of these cases, the elements $z^\gamma w^\delta \partial_{z}^\gamma \partial_{w}^\delta$ in $R\otimes \bigwedge V$ represent classes in the basis for~$H$. Furthermore, the diagonal $\bigoplus_{k=0}^4 H_{[k]}^k=\bigwedge {\mathbb C}\langle z_1\partial_{z_1},z_2\partial_{z_2},w_1\partial_{w_1},w_2\partial_{w_2}\rangle$ is generated by the basis vector f\/ields generating the ${\mathbb T}_{\mathbb C}^{4}$ action on ${\mathbb C}^4$. Let us refer to the basis vectors of $R\otimes \bigwedge V$ of this type as those of Type~I. The additional classes in each basis were, remarkably, also represented by basis vectors of $\mathcal V$ which lie in $\ker\sigma$ but are of one of two other types. The generators of Type II were $z^\alpha w^\beta \partial_{z^\gamma w^\delta}$ in $\ker \sigma$ with the property that $\alpha\cdot\gamma=0$ and $\beta\cdot\delta=0$. Those of the remaining Type~III were non-zero wedge products of generators of Type~I and Type~II. Thus, it appears that the cohomology is generated as a module over the diagonal $\bigoplus_{k=0}^4 H_{[k]}^k=\bigwedge {\mathbb C}\langle z_1\partial_{z_1},z_2\partial_{z_2}, w_1\partial_{w_1}, w_2\partial_{w_2}\rangle$ by $1$ and the basis vectors of $R\otimes \bigwedge V$ which lie in the kernel of $\sigma$ and are of Type II. In a forthcoming paper, we use representation theory to show that such a presentation of the cohomology as a module over $\bigwedge {\mathfrak t}_{\mathbb C}$ is possible for any~$n$ and~$B$.

\begin{Example}
For $B=\begin{pmatrix}1 & 0 \\ 0 & 1\end{pmatrix}$, the cohomology up to homogeneous degree $8$ generated as a~module over the diagonal by $1$, $\partial_{z_1}\wedge\partial_{w_1}$, $\partial_{z_2}\wedge\partial_{w_2}$, $\partial_{z_1}\wedge\partial_{z_2}\wedge\partial_{w_1}\wedge\partial_{w_2}$.
\end{Example}
\begin{Example}
For $B=\begin{pmatrix}2 & 1 \\ 1 & 2\end{pmatrix}$, the cohomology up to homogeneous degree $8$ is generated as a module over the diagonal by $1$, $z_1^2w_2^2\partial_{z_2}\wedge\partial_{w_1}$, $z_2^2w_1^2\partial_{z_1}\wedge\partial_{w_2}$, and $\partial_{z_1}\wedge\partial_{z_2}\wedge\partial_{w_1}\wedge\partial_{w_2}$.
\end{Example}
\begin{Example}
For $B=\begin{pmatrix}3 & -1 \\ -1 & 2\end{pmatrix}$, the cohomology up to homogeneous degree $8$ is generated as a module over the diagonal by $1$ and $\partial_{z_1}\wedge\partial_{z_2}\wedge\partial_{w_1}\wedge\partial_{w_2}$.
\end{Example}
\begin{Example}
For $B=\begin{pmatrix}6 & -2 \\ -2 & 2\end{pmatrix}$, the cohomology up to homogeneous degree $8$ is generated as a module over the diagonal by $1$ and $\partial_{z_1}\wedge\partial_{z_2}\wedge\partial_{w_1}\wedge\partial_{w_2}$ and $z_2^d\partial_{z_1}\wedge\partial_{w_1}\wedge\partial_{w_2},w_2^d\partial_{z_1}\wedge\partial_{z_2}\wedge\partial_{w_1}$ for each $0\le d\le 8$. In fact, we can show that these latter expressions are generators of the full cohomology for every degree $d$. Thus, the algebraic cohomology of this Poisson structure on ${\mathbb C}^4$ is inf\/inite dimensional despite the fact that $B$ is integral and symmetric positive def\/inite.
\end{Example}
\begin{Example}
For $B=\begin{pmatrix}11 & -3 \\ -3 & 2\end{pmatrix}$, the cohomology up to homogeneous degree $8$ is generated as a module over the diagonal by $1$ and $\partial_{z_1}\wedge\partial_{z_2}\wedge\partial_{w_1}\wedge\partial_{w_2}$.
\end{Example}

\subsection*{Acknowledgements}

Portions of this work were completed independently by the two authors during independent sabbatical leaves from California State Polytechnic University Pomona and, separately, while supported by the Provost's Teacher-Scholar Program. We appreciate this support and the suggestions from the referees which improved the paper.

\pdfbookmark[1]{References}{ref}
\LastPageEnding

\end{document}